\renewcommand{\phi}{\varphi}
\DeclareMathOperator{\Int}{int}
\DeclareMathOperator{\length}{length}
\newcommand{\step}[1]{\par\medskip\noindent\it#1\rm}
\newcommand{\scalar}[2]{\langle#1,#2\rangle}
\newcommand{\abs}[1]{\lvert#1\rvert}
\newcommand{\z}{\zeta}
\renewcommand{\gg}{\mathfrak{g}}
\newcommand{\wh}{\widehat}
\newcommand{\e}{\varepsilon}
\newcommand{\s}{\sigma}
\newcommand{\la}{\lambda}
\newcommand{\wt}{\widetilde}
\newcommand{\ol}{\overline}
\renewcommand{\d}{\delta}
\newcommand{\Eucl}{\textup{Euc}}
\newcommand{\p}{\partial}
\newtheoremstyle{pippo}  
  {}       
  {}       
   {\sffamily}   
 {}        
  {\bfseries}  
  {.}   
  {1ex}       
  {}           
\newtheoremstyle{pluto}  {}{}
{\slshape}  {}{\bfseries}  {.} {1ex}    {}
\newtheorem{theorem}{Theorem}[section]
\newtheorem{proposition}[theorem]{Proposition}
\newtheorem{lemma}[theorem]{Lemma}
\newtheorem{corollary}[theorem]{Corollary}
\theoremstyle{pluto}
\newtheorem{definition}[theorem]{Definition}
\newtheorem{remark}[theorem]{Remark}
\newtheorem{example}[theorem]{Example}
\newcommand{\R}{\mathbb{R}}
\newcommand{\G}{\mathbb{G}}
\newcommand{\N}{\mathbb{N}}
\renewcommand{\d}{\delta}
\renewcommand{\t}{\tau}
\renewcommand{\a}{\alpha}
\DeclareMathOperator{\Span}{span}
\DeclareMathOperator{\ad}{ad}
\numberwithin{equation}{section}
\let\oldbibliography\thebibliography
\renewcommand{\thebibliography}[1]{%
  \oldbibliography{#1}%
  \setlength{\itemsep}{0pt}%
}
\titleformat{\section}{%
\normalfont\large\bfseries}{\thesection.}{1em}{}
\titleformat{\subsection}{%
\normalfont\normalsize\bfseries}{\thesubsection.}{1em}{}
\begin{document}

\title{Multiexponential maps in Carnot groups \\ with  applications to convexity and differentiability  
 \thanks{2010 Mathematics Subject Classification. Primary 53C17;
Secondary  49J15.
Key words and Phrases.    Carnot groups, SubRiemannian distance, Horizontal convexity, Cone property, Pansu differentiability.}}
\author{Annamaria Montanari \thanks{Dipartimento di Matematica, Universit\`a di Bologna. Email \texttt{annamaria.montanari@unibo.it}} \and Daniele Morbidelli
\thanks{Dipartimento di Matematica, Universit\`a di Bologna. Email \texttt{daniele.morbidelli@unibo.it}}}

\date{}

\maketitle

\tableofcontents

\begin{abstract}
We analyze some properties of a class of \emph{multiexponential maps}  appearing naturally in the geometric analysis of Carnot groups.  
We will see that such maps can be useful in at least two interesting problems. First,  in relation to the analysis of some regularity properties of horizontally convex sets.  Then, we will show that our multiexponential maps can be used to prove the \emph{Pansu differentiability} of the subRiemannian distance from a fixed point.

\end{abstract}
%
%

\section{Introduction}

In this paper we discuss a class of \emph{multiexponential maps} in Carnot groups.  We introduce    two notions of  ``multiexponential regularity'', a stronger one and a weaker one,    and we show how the weaker one  ensures a ``cone property" for horizontally convex sets. Furthermore, we show that the stronger condition guarantees  the Pansu differentiability of the subRiemannian distance from the origin at the pertinent point.

Let $(\G, \cdot)=(\R^n, \cdot)$ be a Carnot group of step~$s$ and denote by $V_1$ the first (horizontal)
layer of its stratified Lie algebra $ \gg=V_1 \oplus \cdots \oplus V_s $.  See Section~\ref{sezione2} for the precise definition. Assume that $V_1$ is $m$-dimensional and denote by $X_1,\dots , X_m$ the left-invariant vector fields in $V_1$ such that $X_j(0)=\p_{x_j}$   for $j=1,2,\dots,m$. We define 
the $p$-th multiexponential map $\Gamma^{(p)}:(\R^m)^p\to \G=\R^n$ as 
\begin{equation*}
 \Gamma^{(p)}(u_1, u_2,\dots, u_p):= \exp(u_1\cdot X)\cdot \exp(u_2\cdot X)\cdot \cdots\cdot \exp(u_p\cdot X),
\end{equation*}
where given $u_j=(u_j^1, \dots, u_j^m)\in\R^m$, we denoted $u_j\cdot X=\sum_{k=1}^m u_j^k X_k\in V_1 $. Furthermore,  $\exp:\gg\to \G$ denotes the standard exponential map. See~\cite{BonfiglioliLanconelliUguzzoni}.
We are interested in those vectors $\xi\in \R^m$ such that     the following  holds:  there is $p\in\N$  such that the map $\Gamma^{(p)}$ is  a submersion at $(\xi, \xi, \dots, \xi)\in (\R^m)^p,$ namely 
\begin{equation}\label{subus} 
 d \Gamma^{(p)}(\xi, \xi, \dots, \xi):(\R^m)^p\to \R^n \quad\text{is onto}.
\end{equation}
We also consider a second, weaker condition: there is $p\in\N$ such that the map $\Gamma^{(p)}$ is locally open at $(\xi, \xi, \dots,\xi)\in(\R^m)^p$. Namely, 
\begin{equation}
 \label{saba} 
 \begin{aligned}
&\text{ for all $\e>0$ there is $\sigma_\e>0$ such  that }
 \\&   \Gamma^{(p)}(B_{\Eucl}(\xi, \xi, \dots, \xi),\e\big)  \supset 
  B_\Eucl (\Gamma^{(p)} (\xi, \xi, \dots, \xi),\s_\e ).                                                      \end{aligned}
\end{equation} 
  Here and hereafter we denote by $B_\Eucl$ Euclidean balls.   
In view of  the identification $\R^m\ni u\mapsto u\cdot X\in V_1$, sometimes we will refer to the submersion (respectively, local openness) conditions~\eqref{subus} and~\eqref{saba} by saying that the $p$-th multiexponential is a submersion (respectively, is locally open) at 
 $\xi\cdot X\in V_1$. Furthermore, by dilation properties in Carnot groups (see Section~\ref{sezione2}), property~\eqref{subus} (respectively,~\eqref{saba}) holds for   $p\in\N$ and $\xi\in\R^m\setminus\{0\}$ if and only if~\eqref{subus} (respectively,~\eqref{saba}) holds for $p\in\N$ and   $\xi/|\xi|\in\mathbb{S}^{m-1}$. 
  By elementary differential calculus,  if $\Gamma^{(p)}$ is a submersion at $(\xi, \dots, \xi)\in (\R^m)^p$, then 
$\Gamma^{(p)}$ is  locally open at $(\xi, \dots, \xi)\in (\R^m)^p$. In other words,~\eqref{subus} implies~\eqref{saba}. The opposite implication may fail, see next paragraph.

  In relation with the notions we have introduced above, it is also interesting to consider the path $\gamma_\xi(s)=\exp(s\xi\cdot X)$.  It is well known that such path is defined  for all $s\in\R$ and it is a global  length-minimizer for the Carnot-Carath\'eodory distance associated with the vector fields $X_1,\dots, X_m$. 
It is easy to realize that if the minimizer $\gamma_\xi$ is singular (i.e., abnormal) in the usual sense of the subRiemannian control theory (see \cite{AgrachevBarilariBoscain})    then
for all $p\in\N$
the multiexponential~$\Gamma^{(p)}$ is not   a submersion   at $(\xi, \dots, \xi)\in (\R^m)^p$.   See Remark~\ref{necesse}. 
On the other side, in a step-two Carnot group, if we take any $\xi\in \R^m$ such that the curve $\gamma_\xi$ is singular (abnormal), in~\cite[Theorem~2.1 and Remark~2.2]{Morbidelli18} it is shown that there is $p\in\N$ such that $\Gamma^{(p)}$ is locally open at $(\xi, \dots, \xi)\in (\R^m)^p$. This provides  examples where the local openness~\eqref{saba} holds for some $p\in \N$, while the stronger submersion condition~\eqref{subus} fails for all $p\in\N$.

Here is the statement of our  first result, on  the cone property  for horizontally convex sets. 

 \begin{theorem}\label{teoremetto} 
Let $\G=(\R^n,\cdot )$ be a Carnot group and assume that   for some $V=\xi\cdot X\in V_1$, there is $p\in\N$ such that   the local openness condition~\eqref{saba} holds.    
Let also~$A\subset\G$ be a horizontally convex set such that for some  $  \ol x\in \ol A$ we have $\ol x\cdot \exp(V)\in\Int(A)$. Then   there  is $\e>0$ 
such that    for all $x\in   \ol A$ with $d(x,\ol x)<\e $ we have   
\begin{equation}\label{ggo}  
  \begin{aligned}
&\bigcup_{0<s<1}B\Big(x \cdot\exp(sV ), \e s\Big)\subset \Int(A).
  \end{aligned}
\end{equation}\end{theorem}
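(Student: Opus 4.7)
The plan is to proceed by reductions, upgrade the openness assumption to a scaled version, and then combine the scaled openness with horizontal convexity. First, by translating $A$ on the left by $\bar x^{-1}$ we reduce to $\bar x = 0$, since horizontal convexity is left-invariant. Second, using the dilation invariance of the local openness condition recalled after~\eqref{saba}, we may rescale $\xi$ so that $\Gamma^{(p)}(\xi,\dots,\xi) = \exp(V)$. Under these reductions one has $0 \in \overline A$, $\exp(V) \in \Int(A)$, $A$ is horizontally convex, and $\Gamma^{(p)}$ is locally open at $(\xi,\dots,\xi)$ with image $\exp(V)$.

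Next I would upgrade~\eqref{saba} to a scaled form via the Carnot dilations $\delta_s$. The identity $\Gamma^{(p)}(sv_1,\dots,sv_p) = \delta_s(\Gamma^{(p)}(v_1,\dots,v_p))$, the fact that $\delta_s$ scales subRiemannian distances by $s$, and the local comparison between Euclidean and cc balls near $\exp(V)$ together yield constants $\e_0,\sigma>0$ such that, for every $s \in (0,1]$ and every $z \in B(\exp(sV), s\sigma)$, there exist $v_1,\dots,v_p \in \R^m$ with $|v_i - \xi|_\Eucl < \e_0$ and $z = \Gamma^{(p)}(sv_1,\dots,sv_p)$.

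For the main argument, fix $\delta_0>0$ with $B(\exp(V),\delta_0) \subset A$ and a small $\e>0$ to be determined. Given $x \in \overline A$ with $d(x,0) < \e$, $s \in (0,1)$, and $z \in B(x\cdot\exp(sV),\e s)$, left-invariance of $d$ combined with the scaled openness (for $\e \le \sigma$) produces $v_1,\dots,v_p$ near $\xi$ such that $z = x\cdot\exp(sv_1\cdot X)\cdot\exp(sv_2\cdot X)\cdots\exp(sv_p\cdot X)$. By continuity, the full-scale point $y := x\cdot\Gamma^{(p)}(v_1,\dots,v_p)$ is close to $\exp(V) \in \Int(A)$, hence lies in $\Int(A)$. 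The idea is then to propagate membership from $y$ down to $z$ via an iterative use of horizontal convexity along the $p$ horizontal segments of the zig-zag, scaling one coordinate at a time from $1$ to $s$. The case $x \in \overline A \setminus A$ is treated by approximation with points $x_k \in A$ and passage to the limit.

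The hard part will be controlling the mixed-scale intermediate zig-zag points arising in this iteration, which a priori are neither in $A$ nor on horizontal segments directly connecting points known to be in $A$. I expect to overcome this by combining the freedom in the preimages afforded by local openness with the classical horizontal cone property at $0$ in the direction $V$, which places the diagonal points $\exp(jV/p)$ in $\overline A$ and their small horizontal perturbations in $A$. Once $z$ is shown to belong to $A$, the upgrade to $z \in \Int(A)$ should follow from the fact that the scaled openness covers an entire ball of parameter vectors, yielding a whole ball of image points, all in $A$.
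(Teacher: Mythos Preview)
Your scaled-openness upgrade and the overall skeleton are sound, but the step you flag as ``the hard part'' is a genuine gap, and the fix you sketch does not close it. If you try to pass from $y=x\cdot\Gamma^{(p)}(v_1,\dots,v_p)\in\Int(A)$ to $z=x\cdot\Gamma^{(p)}(sv_1,\dots,sv_p)$ by scaling the last remaining coordinate at each stage, consecutive points are indeed horizontally aligned, but horizontal convexity needs \emph{two} points of $A$ on each line and you only have one. The missing endpoint is the prefix $x\cdot\Gamma^{(k)}(v_1,\dots,v_k)$, which is merely close to $\exp(\tfrac{k}{p}V)$; even granting $\exp(\tfrac{k}{p}V)\in A$ (via convexity along the $V$-segment when $0\in A$), that says nothing about nearby points, because these intermediate diagonal points are not known to be interior. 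The ``freedom in the preimages'' from local openness does not help either: it yields many $(v_1,\dots,v_p)$ hitting the same target $z$, not control over the prefix points along the way. And scaling the first coordinate instead does not even produce horizontally aligned points, since right translation by the remaining factors destroys horizontality.

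The device that actually closes the gap is a simplex trick. One first observes that for any $\lambda_1,\dots,\lambda_p\ge 0$ with $\sum_j\lambda_j=p$ one has $\Gamma^{(p)}(\lambda_1\xi,\dots,\lambda_p\xi)=\exp(p\xi\cdot X)$; hence, by continuity and compactness of the simplex, $x\cdot\Gamma^{(p)}(\lambda_1 w_1,\dots,\lambda_p w_p)\in\Int(A)$ uniformly over $\{\lambda_j\in[0,p],\ \sum_j\lambda_j=p\}$ whenever $x$ is close to $\bar x$ and each $w_j$ is close to $\xi$. This supplies a whole family of interior ``far endpoints''. Now iterate \emph{forward} from $x\in A$: since $x\cdot\Gamma^{(p)}(pu_1,0,\dots,0)\in\Int(A)$ by the simplex observation, convexity along $u_1$ gives $x\cdot\exp(\lambda u_1\cdot X)\in A$; since $x\cdot\Gamma^{(p)}(\lambda u_1,(p-\lambda)u_2,0,\dots,0)\in\Int(A)$ (coefficients sum to $p$), convexity along $u_2$ gives $x\cdot\Gamma^{(2)}(\lambda u_1,\lambda u_2)\in A$; at step $k$ the far endpoint has coefficients $(\lambda,\dots,\lambda,p-(k-1)\lambda,0,\dots,0)$, which lie in $[0,p]$ and sum to $p$ for every $\lambda\in(0,1]$. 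After $p$ steps one gets $x\cdot\Gamma^{(p)}(\lambda u_1,\dots,\lambda u_p)\in A$ for all $\lambda\in(0,1]$ and all $u_j$ near $\xi$, and then your scaled-openness inclusion (together with the approximation for $x\in\overline A\setminus A$) finishes the proof.
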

   In the statement of the theorem $B(x, r)$ denotes the ball  centered at~$x$ 
and with radius~$r$  with respect to the subRiemannian distance~$d$ defined by the vector 
fields $X_1,\dots, X_m$.  
The set  appearing in the left-hand side of~\eqref{ggo} is a (truncated)
subRiemannian twisted cone  and the \emph{horizontal segment} $\{  x\cdot\exp(sV) :0\leq s\leq 1\}$ can be 
considered as the  axis  of the 
cone.    Note that a horizontal  segment    does not need to be     an Euclidean 
segment, as explicit examples will show later.    Finally, note that $\Int (A)$ and $\ol A$ denote the interior and the closure of a set~$A$ in the Euclidean topology (which is the same of that induced by the subRiemannian distance).

The cone property appears in several interesting  questions in the geometric analysis of subRiemannian spaces:
\begin{itemize}[nosep]
\item[(i)] in the theory of sets with finite horizontal
perimeter in Carnot groups  (see 
\cite{MontiVittone12};
\item[(ii)] in the intrinsic version of Rademacher's 
theorem in 
the case of the Heisenberg group (see \cite{FranchiSerapioniSerracassano11});

\item[(iii)] in the definition of intrinsic Lipschitz  
continuous graphs inside Carnot groups (see \cite{FranchiSerapioni16} and  the 
references therein).
\end{itemize}

Let us observe that the cone property~\eqref{ggo} is trivial for convex sets in the Euclidean space.
In such case the submersion propery \eqref{subus} is always fulfilled with $p=1$.  On the other side,  in subRiemannian settings we will see that if we do not assume the local openness~\eqref{saba} for some $p\in \N$, then Theorem \ref{teoremetto} can be false. Counterexamples will be presented  
 in Sections~\ref{ggiorgio} and~\ref{gggiorgio}.

The proof of Theorem \ref{teoremetto} is based on an argument used by Cheeger and Kleiner in \cite{CheegerKleiner10}, in the context  of classification of monotone sets in the Heisenberg group. Namely, the mentioned authors used the maps $\Gamma^{(p)}$ with $p=2$ to prove a qualitative version of Theorem~\ref{teoremetto}  in the three-dimensional Heisenberg group. Then, a similar argument with maps $\Gamma^{(p)} $ with $p\geq 2$
has been used by the second author to prove a cone property in general two-step Carnot groups in  \cite{Morbidelli18}. Here we adapt the argument in order to   show   a statement which holds in any Carnot group of any step.

 In this paper we are able to find a new interesting class of models,   known as \emph{filiform Carnot groups of first type},    where  the hypotheses of such theorem are fulfilled. 
In order to state our result,   let us introduce some notation.    Consider in $\R^{p+2}$, equipped with coordinates $(x,y, t_1, t_2, \dots, t_p)$, the vector fields
\begin{equation}\label{osteo}  
\begin{aligned}
&
  X =\p_x\quad \text{and}\quad Y=\p_y+ x \p_{t_1} + \frac{x ^2}{2}\p_{t_2}+\cdots + 
  \frac{x^p}{p! }\p_{t_p}=\p_y+\sum_{ k=1}^p\frac{x^k}{k!}\p_{t_k}.
\end{aligned}
 \end{equation}
Given the vector fields $X,Y$, there is a   Carnot group $(\R^{p+2},\cdot)$ of step $p+1$  such that  $V_1=\Span\{X,Y\}$. See the discussion in Section~\ref{zibibbo}, for details. Note that if $p=1$ then we get the Heisenberg group. If $p=2$, then we get a Carnot group of step three which is known as the Engel group. Otherwise, we will call it the \emph{filiform  group} of step $p+1$.
  The  nilpotent stratified Lie algebra generated by $X$ and $Y$ is known as \emph{filiform algebra of first type}. See Vergne's paper~\cite{Vergne} and see also \cite[Section~6]{DLDMV19}.

\begin{theorem}\label{filetto} 
 Let $p\geq 2$,   let $X$ and $Y$ be the vector fields in \eqref{osteo}. 
Consider  a horizontal left invariant vector field $Z= uX+vY$. Then,   there is $q\in\N$ such that~\eqref{subus} holds 
if and only if $u\neq 0$.
\end{theorem}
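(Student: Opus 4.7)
The plan is to compute the image of $d\Gamma^{(q)}$ at $(\zeta,\dots,\zeta)$ with $\zeta=(u,v)\in\R^2$ and $Z=\zeta\cdot X=uX+vY\in V_1$, and to characterize when this image fills $\gg$. The main setup step is to transport the differential back to the Lie algebra: writing the $i$-th partial derivative of $\Gamma^{(q)}$ via the product rule and then applying $d\exp_Z=(L_{\exp Z})_*\circ M$ with $M:=(I-e^{-\ad_Z})/\ad_Z$, one obtains that the image of $d\Gamma^{(q)}(\zeta,\dots,\zeta)$, pulled back to $\gg=T_e\G$ by left translation with $\exp(-qZ)$, equals
\[
 \I_q \;=\; \sum_{i=1}^{q} e^{-(q-i)\,\ad_Z}\,M(V_1).
\]
Because $\gg$ has step $p+1$, $\ad_Z$ is nilpotent of order at most $p+1$, so $M=I-\tfrac12\ad_Z+\cdots$ is a polynomial in $\ad_Z$, commutes with $\ad_Z$, and is invertible on $\gg$.

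A Vandermonde argument next shows that, for $q\ge p+1$, the $q$ operators $\{e^{-(q-i)\,\ad_Z}\}_{i=1}^{q}$ $\R$-linearly span $\Span\{I,\ad_Z,\dots,\ad_Z^p\}$: picking $p+1$ distinct values of $k=q-i$ the coefficient matrix $\bigl((-k)^j/j!\bigr)_{k,j}$ is essentially Vandermonde and hence invertible. Combining this with the commutativity of $M$ and $\ad_Z$ and the invertibility of $M$, one gets
\[
 \I_q \;=\; \sum_{k=0}^{p}\ad_Z^k\,M(V_1) \;=\; M\!\left(\sum_{k=0}^{p}\ad_Z^k\,V_1\right),
\]
so that the submersion property for $q\ge p+1$ reduces to $\sum_{k=0}^p\ad_Z^k V_1=\gg$.

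To test this, set $T_k:=\ad_X^k(Y)$, so that $\{X,Y,T_1,\dots,T_p\}$ is a basis of $\gg$. The brackets $[X,Y]=T_1$, $[X,T_k]=T_{k+1}$ for $1\le k<p$, $[X,T_p]=0$, and $[Y,T_k]=0$ for $k\ge 1$ give
\[
 \ad_Z(X)=-v\,T_1,\qquad \ad_Z(Y)=u\,T_1,\qquad \ad_Z(T_k)=u\,T_{k+1}\ (1\le k<p),\qquad \ad_Z(T_p)=0.
\]
If $u\neq 0$, iterating $\ad_Z$ on $Y$ produces $Y,\,uT_1,\,u^2T_2,\dots,u^p T_p$, which together with $X$ spans $\gg$, so $\I_q=\gg$ whenever $q\ge p+1$. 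If $u=0$, then $\ad_Z$ annihilates $Y$ and every $T_k$ with $k\ge 1$, hence $\sum_{k=0}^p\ad_Z^k V_1=\Span\{X,Y,T_1\}$ has dimension $3<p+2=\dim\gg$ (using $p\ge 2$), and $\I_q\subseteq M(\Span\{X,Y,T_1\})$ is a proper subspace of $\gg$ for every $q$. The only delicate point is the initial identification of $\I_q$; once this Lie-algebraic description is in place, the dichotomy is immediate from the filiform brackets.
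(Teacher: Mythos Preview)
Your argument is correct and takes a genuinely different route from the paper.

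For the sufficiency direction ($u\neq 0$) the paper computes $\Gamma^{(p+1)}$ explicitly in the coordinates $(x,y,t_1,\dots,t_p)$, writes out the integrals $\int_0^1 (k+s)^j\,ds$ appearing in the $v_k$-derivatives, extracts a particular $(p+2)\times(p+2)$ submatrix of the Jacobian, and reduces its determinant by column operations to a numerical Vandermonde determinant. For the necessity direction ($u=0$) the paper does not compute at all: it invokes the well-known fact that $s\mapsto\exp(sY)$ is an abnormal extremal (Remark~\ref{necesse}), and it also exhibits explicit horizontally convex sets for which the cone property fails along~$Y$.

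You instead transport the differential to the Lie algebra via $d\exp_Z=(L_{\exp Z})_*\circ M$ and identify the image with $\sum_k e^{-k\,\ad_Z}M(V_1)$. Your Vandermonde argument is then on the operators $e^{-k\,\ad_Z}$ (yielding $\Span\{I,\ad_Z,\dots,\ad_Z^{p}\}$) rather than on an explicit coordinate matrix, and the submersion question collapses to the purely algebraic criterion $\sum_{k=0}^{p}\ad_Z^k V_1=\gg$. The filiform bracket relations then dispose of both directions simultaneously: when $u\neq 0$ the iterates $\ad_Z^k Y=u^kT_k$ fill out the algebra, and when $u=0$ one has $\ad_Z^2=0$, so the image is trapped in $\Span\{X,Y,T_1\}$ for \emph{every} $q$.

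What each approach buys: the paper's computation is self-contained and pins down $q=p+1$ directly without invoking the $d\exp$ formula, fitting the hands-on style of the surrounding sections. Your argument is coordinate-free, treats both implications uniformly, and isolates the intrinsic condition $\sum_k\ad_Z^kV_1=\gg$, which would carry over verbatim to other Carnot groups once their bracket structure is known. It is worth noting that, curiously, both proofs hinge on a Vandermonde invertibility, but at different levels: yours on the operator side, the paper's on a concrete matrix of integrals.
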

  The proof  that $u\neq 0$ is a sufficient condition in Theorem \ref{filetto} is proved in Theorem \ref{cassetto}, while  the fact that it is also  a necessary condition is proved in Section~\ref{ggiorgio}. 

Then we get the following corollary.
\begin{corollary}\label{unoo} 
 Let $A\subset \R^{p+2} $ be a \emph{horizontally convex} set with respect to the pair of vector fields 
 in~\eqref{osteo}. Assume that $(z,t)=(x,y,t_1,\dots, t_p)\in \p A$ and assume that 
  there is $Z:= u X+vY$ such that 
 $ (z,t)\cdot\exp( Z)\in\Int(A)$ and  $u\neq 0$.
 Then there is $\e>0$ such that 
 \begin{equation*}
  \begin{aligned}
&\bigcup_{0<s\leq 1 }B\Big((z,t) \cdot\exp(s Z), \e s\Big)\subset \Int(A).
  \end{aligned}
\end{equation*}
\end{corollary}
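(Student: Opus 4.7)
The plan is to deduce Corollary \ref{unoo} as an immediate application of Theorem \ref{teoremetto}, using Theorem \ref{filetto} to verify its hypothesis on local openness. Concretely, since the horizontal layer $V_1 = \Span\{X,Y\}$ of the filiform group is two-dimensional, writing $Z = uX + vY$ corresponds to $\xi = (u,v) \in \R^2$, and the assumption $u \neq 0$ is exactly the condition in Theorem \ref{filetto} guaranteeing the existence of some $q \in \N$ for which the submersion condition \eqref{subus} holds at $(\xi,\dots,\xi) \in (\R^2)^q$. By the elementary remark made in the introduction (right after the definitions of~\eqref{subus} and~\eqref{saba}), the submersion condition implies the local openness condition~\eqref{saba} for the same $q$.

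Next, I would apply Theorem \ref{teoremetto} directly with $V = Z \in V_1$ and $\ol x = (z,t)$. The hypotheses are fulfilled: $(z,t) \in \p A \subset \ol A$, the local openness~\eqref{saba} for $V$ has just been verified, and by assumption $(z,t) \cdot \exp(Z) \in \Int(A)$. The theorem produces $\e_0 > 0$ such that, in particular at the point $x = (z,t)$ itself,
\begin{equation*}
  \bigcup_{0<s<1} B\bigl((z,t) \cdot \exp(sZ),\, \e_0 s\bigr) \subset \Int(A).
\end{equation*}

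Finally, to promote the range from $0 < s < 1$ to $0 < s \leq 1$ as stated in the corollary, I only need to handle the endpoint $s = 1$. This is immediate: since $(z,t) \cdot \exp(Z) \in \Int(A)$ and $\Int(A)$ is open in the topology induced by $d$, there exists $r > 0$ with $B\bigl((z,t) \cdot \exp(Z), r\bigr) \subset \Int(A)$, and replacing $\e_0$ by $\e := \min(\e_0, r)$ gives the full claim. There is no real obstacle here; the content of the corollary is entirely absorbed by the general cone property Theorem \ref{teoremetto} once one feeds in the nontrivial input provided by Theorem \ref{filetto}, so the task is essentially bookkeeping and a harmless boundary adjustment at $s = 1$.
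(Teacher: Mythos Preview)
Your argument is correct and is exactly the derivation the paper intends: the corollary is stated without an explicit proof precisely because it follows at once by feeding Theorem~\ref{filetto} (which supplies the submersion condition~\eqref{subus}, hence~\eqref{saba}) into Theorem~\ref{teoremetto}. Your additional remark handling the endpoint $s=1$ via the openness of $\Int(A)$ is a harmless bookkeeping adjustment and introduces no new idea beyond what the paper implicitly assumes.
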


Corollary \ref{unoo}  generalizes the result  proved by Arena, Caruso and Monti in \cite{ArenaCarusoMonti12} and by the second author in \cite{Morbidelli18}.





Next we pass to a description of our second set of results.  In Section~\ref{dudu}, we   will prove   the following statement.     

 \begin{theorem}\label{oppiaceo} 
 Let $ (\R^n, \cdot)$ be a Carnot group and assume that   for some $V=\xi\cdot X\in V_1$ condition~\eqref{subus} holds for some $p\in\N$.  
 Then the subRiemannian distance from the origin is Pansu differentiable at $\exp(V)$.
\end{theorem}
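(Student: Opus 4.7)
The candidate Pansu differential at $g:=\exp(V)$ is the homogeneous group homomorphism $L:\G\to\R$ given by $L(h)=\langle\xi/|\xi|,h'\rangle$, where $h':=(h_1,\dots,h_m)\in\R^m$ denotes the projection of $h\in\G=\R^n$ onto the first $m$ (horizontal) coordinates. Using the dilation identity $\Gamma^{(p)}(\lambda u_1,\dots,\lambda u_p)=\delta_\lambda\bigl(\Gamma^{(p)}(u_1,\dots,u_p)\bigr)$ (which follows from $\exp(\lambda u_j\cdot X)=\delta_\lambda(\exp(u_j\cdot X))$), the submersion condition~\eqref{subus} at $(\xi,\dots,\xi)$ is equivalent to the same condition at $(\xi/p,\dots,\xi/p)$; so after replacing $\xi$ by $\xi/p$ we may assume $\Gamma^{(p)}(\xi,\dots,\xi)=g$. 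Then $V=p\xi\cdot X$, $|V|=p|\xi|$, and $d(0,g)=|V|$, with the horizontal one-parameter subgroup $s\mapsto\exp(sV/|V|)$ as length minimizer. The plan is to set $y_\lambda:=g\cdot\delta_\lambda(h)$ and sandwich $d(0,y_\lambda)-d(0,g)$ between matching quantities of the form $\lambda L(h)+O(\lambda^2)$, uniformly for $h$ in bounded sets.

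For the upper bound, the implicit function theorem applied to the submersion $\Gamma^{(p)}$ at $(\xi,\dots,\xi)$ produces a smooth local section $y\mapsto(u_1(y),\dots,u_p(y))$ of $\Gamma^{(p)}$, defined near $g$, with $u_j(g)=\xi$ and $u_j(y)-\xi=O(|y-g|)$. The associated concatenated horizontal polyline is a CC-admissible curve from $0$ to $y$ of length $\sum_{j=1}^p|u_j(y)|$, so $d(0,y)\le\sum_j|u_j(y)|$. A key observation is that the horizontal projection of $\Gamma^{(p)}$ is additive,
\[
\bigl(\Gamma^{(p)}(u_1,\dots,u_p)\bigr)'=u_1+\cdots+u_p,
\]
because the commutators generated by the BCH formula lie in $V_2\oplus\cdots\oplus V_s$. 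Applied to $y_\lambda$, this gives $\sum_j u_j(y_\lambda)=(y_\lambda)'=p\xi+\lambda h'$. Writing $u_j(y_\lambda)=\xi+v_j$ with $v_j=O(\lambda)$ and $\sum_j v_j=\lambda h'$, the Taylor expansion $|\xi+v_j|=|\xi|+\langle\xi/|\xi|,v_j\rangle+O(|v_j|^2)$ yields
\[
d(0,y_\lambda)\le\sum_{j=1}^p|u_j(y_\lambda)|=p|\xi|+\lambda L(h)+O(\lambda^2),
\]
uniformly for $h$ in bounded sets.

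For the matching lower bound, the first $m$ coordinates of any horizontal curve from $0$ are exactly the integrals of its horizontal controls, so Minkowski's inequality yields $d(0,y)\ge|y'|$ for every $y\in\G$. In particular $d(0,y_\lambda)\ge|p\xi+\lambda h'|=p|\xi|+\lambda L(h)+O(\lambda^2)$. Combining the two bounds and dividing by $\lambda>0$ shows $\lambda^{-1}\bigl[d(0,y_\lambda)-d(0,g)\bigr]\to L(h)$ as $\lambda\to 0^+$, uniformly for $h$ in bounded sets. Since $L$ is manifestly a homogeneous group homomorphism (horizontal projections add under the group law and scale linearly under $\delta_\lambda$), this is exactly Pansu differentiability of $d(0,\cdot)$ at $g$ with differential $L$. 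The main obstacle is the upper bound: the submersion hypothesis is used precisely to apply the implicit function theorem and produce, smoothly in $y_\lambda$, a $p$-piece horizontal polyline whose total CC-length realizes $d(0,y_\lambda)$ to first order in $\lambda$; the lower bound is elementary and requires no assumption on $V$.
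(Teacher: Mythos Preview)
Your proof is correct and follows essentially the same route as the paper: for the upper bound you invoke the inverse/implicit function theorem on the submersion $\Gamma^{(p)}$ to produce a smooth local section, observe that the horizontal projection of $\Gamma^{(p)}$ is additive so that $\sum_j u_j(y_\lambda)=p\xi+\lambda h'$, and then Taylor-expand $\sum_j|\xi+v_j|$ exactly as the paper does. The one difference is the lower bound: the paper cites \cite[Lemma~3.2]{LeDonnePinamontiSpeight17}, whereas you give the self-contained one-line argument $d(0,y)\ge|y'|$ via Minkowski's inequality on the controls, which is a modest gain in transparency but not a change of strategy.
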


In particular we shall apply our statement to get a new proof  of some recent results by   Pinamonti and Speight in   \cite{PinamontiSpeight18}.
\begin{corollary}[\cite{PinamontiSpeight18}]
 \label{prof} Let $ (\R^n, \cdot)$ be a filiform Carnot group. If $u\neq 0$, then  the subRiemannian distance   from the origin is Pansu differentiable at $\exp(uX+vY)$. 
\end{corollary}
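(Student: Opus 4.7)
The plan is to derive Corollary~\ref{prof} as an immediate combination of Theorem~\ref{filetto} and Theorem~\ref{oppiaceo}, with essentially no additional work beyond recognizing the hypotheses match.

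Concretely, I would first fix the horizontal vector $V := uX + vY \in V_1$ with $u \neq 0$. This is exactly the setting of Theorem~\ref{filetto}, whose ``if'' direction guarantees the existence of an integer $q \in \mathbb{N}$ such that the multiexponential map $\Gamma^{(q)}$ is a submersion at $(V, V, \ldots, V) \in V_1^q$ (up to the usual identification $\R^m \ni \xi \leftrightarrow \xi\cdot X \in V_1$ with $m=2$ here). In other words, the submersion condition~\eqref{subus} is satisfied at $V$.

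Next I would invoke Theorem~\ref{oppiaceo}, whose hypothesis is precisely the existence of some $p \in \mathbb{N}$ for which~\eqref{subus} holds at the horizontal vector in question. The conclusion of that theorem is the Pansu differentiability of the subRiemannian distance from the origin at the point $\exp(V) = \exp(uX + vY)$, which is exactly the statement of Corollary~\ref{prof}.

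There is no genuine obstacle: the two main theorems of the paper have been designed to dovetail, and the corollary is really a bookkeeping statement that the filiform group of first type provides a concrete family where the abstract hypothesis of Theorem~\ref{oppiaceo} can be checked explicitly. The only point that deserves a line of comment is that Theorem~\ref{filetto} is stated for $p \geq 2$ (filiform of step $\geq 3$); for $p = 1$ one recovers the Heisenberg group, where Pansu differentiability of the distance at any nonzero horizontal point is classical and in fact~\eqref{subus} holds already with $q = 1$, so the corollary remains valid and may be mentioned separately or absorbed into the same statement.
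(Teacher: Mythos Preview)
Your proposal is correct and matches the paper's own argument essentially verbatim: the paper states that Corollary~\ref{prof} ``follows immediately putting together Theorems~\ref{filetto} and~\ref{oppiaceo}.''

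One small correction to your closing remark: in the Heisenberg case ($p=1$) the map $\Gamma^{(1)}:\R^2\to\R^3$ cannot be a submersion anywhere, since its differential has rank at most~$2$; what is true is that $\Gamma^{(2)}$ is a submersion at every nonzero $(\xi,\xi)\in(\R^2)^2$, as shown in the paper's discussion of M\'etivier groups. This does not affect the validity of your main argument.
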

The proof of Corollary~\ref{prof}   follows immediately putting together Theorems~\ref{filetto} and~\ref{oppiaceo}. Our argument seems to be somewhat simpler than the original one in \cite{PinamontiSpeight18}.

In the setting of Carnot groups of step two, Le Donne, Pinamonti and Speight \cite{LeDonnePinamontiSpeight17}   proved   
that the subRiemannian distance is differentiable at $\exp(V)$ for any   $V\in V_1$.    Such statement can not be obtained as a consequence  of Theorem~\ref{oppiaceo}, because it may happen that the curve $\gamma(s)=\exp(sV)$ is abnormal and  
in such case there is no $p\in \N$ such that the $p$-th multiexponential satisfies the submersion condition~\eqref{subus} at the corresponding point $(\xi,\dots, \xi)\in(\R^m)^p$, where $\xi\cdot X= V$.  
However, the Pansu differentiability  can be proved  at abnormal points using the local openness~\eqref{saba}  of the maps $\Gamma^{(p)}$.   The argument, which
can have some independent interest in other questions related with two-step Carnot groups, will be carried out in
  Section~\ref{duofold}. Here is the statement.
\begin{theorem}[\cite{LeDonnePinamontiSpeight17}] \label{pasodos}      If $(\R^n, \cdot)$ is a Carnot group of step two,  then the subRiemannian distance from the origin
  is Pansu differentiable at the point $\exp(V)$ for any nonzero  $V\in V_1$.
\end{theorem}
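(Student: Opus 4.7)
The plan is to dichotomize by whether the direction $V = \xi\cdot X$ is abnormal, and handle each case with a separate tool from the machinery already developed in the paper.

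If $V$ is not abnormal then a direct BCH computation in step two yields
\begin{equation*}
d\Gamma^{(2)}(\xi/2,\xi/2)(w_1, w_2) = (w_1 + w_2)\cdot X + \tfrac{1}{4}\bigl[V,\, (w_2 - w_1)\cdot X\bigr],
\end{equation*}
which is surjective onto $V_1\oplus V_2$ precisely because $\ad_V\colon V_1\to V_2$ is surjective. Thus the submersion condition \eqref{subus} holds with $p = 2$ and Theorem \ref{oppiaceo} applies directly, giving Pansu differentiability of $d(0,\cdot)$ at $\exp(V)$.

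The main content of the theorem is the abnormal case. In step two, \cite[Theorem~2.1 and Remark~2.2]{Morbidelli18}, combined with the homogeneity of \eqref{saba}, supplies some $p\in\N$ for which $\Gamma^{(p)}$ is locally open at $(\xi/p,\ldots,\xi/p)$. The strategy is to adapt the proof of Theorem \ref{oppiaceo}, using only the weaker property \eqref{saba} in place of \eqref{subus}. Writing $u_i = \xi/p + v_i$ and using the step-two BCH identity
\begin{equation*}
\Gamma^{(p)}(u_1,\ldots,u_p) = \exp\Bigl(\textstyle\sum_i u_i\cdot X + \tfrac{1}{2}\sum_{i<j}[u_i\cdot X, u_j\cdot X]\Bigr),
\end{equation*}
one sees that if $y = \exp(V)\cdot\delta_t h = \Gamma^{(p)}(u_1,\ldots,u_p)$ then horizontal matching forces $\sum_i v_i = t\,h_H$ (with $h_H$ the horizontal projection of $\log h$), and a Taylor expansion of the Euclidean norms gives
\begin{equation*}
d(0, y) \leq \sum_i|u_i| = |V| + t\,\langle \xi/|\xi|,\, h_H\rangle + O\Bigl(\sum_i|v_i|^2\Bigr).
\end{equation*}

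The crucial observation is that the component of the vertical target modulo $\Im(\ad_V)$ is of order $t^2$: the $O(t)$ vertical contribution $(t/2)[V, h_H\cdot X]$ automatically lies in $\Im(\ad_V)$. Therefore the quadratic correction $\tfrac{1}{2}\sum_{i<j}[v_i\cdot X, v_j\cdot X]$ only needs to cover an $O(t^2)$ target, which is consistent with $|v_i| = O(t)$ and hence gives $\sum_i|v_i|^2 = O(t^2) = o(t)$, as required. The matching lower bound $d(0,y)\geq |V| + t\langle \xi/|\xi|, h_H\rangle + o(t)$ follows from the triangle inequality and the minimality of $\gamma_V$. The main obstacle is to promote the purely qualitative local openness \eqref{saba} provided by \cite{Morbidelli18} to the quantitative Lipschitz-type estimate $|u_i - \xi/p| = O(|y - \exp(V)|_{\Eucl})$ used above: this will be done by revisiting the iterative construction of \cite{Morbidelli18} and exploiting the dilation structure of step-two Carnot groups.
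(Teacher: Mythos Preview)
Your dichotomy is reasonable and the non-abnormal branch is correct: when $\ad_V\colon V_1\to V_2$ is onto, $d\Gamma^{(2)}(\xi,\xi)$ is surjective and Theorem~\ref{oppiaceo} applies. The gap is entirely in the abnormal branch. There the whole argument rests on the quantitative bound $|v_i|=O(d(z,t))$, which you do not prove but defer to ``revisiting the iterative construction of \cite{Morbidelli18} and exploiting the dilation structure''. Two problems with this. First, qualitative local openness~\eqref{saba} only says that for each $\epsilon>0$ there is \emph{some} $\sigma_\epsilon>0$; it carries no modulus linking $|v_i|$ to the size of the target, and dilations do not help because $\delta_\lambda$ moves the base point from $\exp(V)$ to $\exp(\lambda V)$ rather than scaling around a fixed point. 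Second, the paper states explicitly (see the paragraph after Proposition~\ref{appro}) that the estimates of \cite{Morbidelli18} are \emph{not sufficient} for the present system, because~\eqref{jowo} contains the additional term $Q(p\xi,z)$. So the promised ``revisiting'' is not a routine matter: it is exactly the content of the theorem, and your sketch stops short of it. Your observation that the $O(t)$ vertical component lies in $\Im(\ad_V)$ is correct, but it does not by itself yield the needed control on the $v_i$.

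The paper's argument is structurally different and avoids the dichotomy. Instead of trying to upgrade local openness, it builds an \emph{explicit} solution of~\eqref{jowo} with the uniform bound $\sum_j|u_j|\le C(|z|+|t|^{1/2})$, valid for every $\xi$ simultaneously (Proposition~\ref{appro}). The key idea is to replace the $\xi$-dependent equation $Q\bigl(\sum_j(p-2j+1)u_j,\xi\bigr)=\cdots$ by the stronger, $\xi$-free constraint $\sum_j(p-2j+1)u_j=-pz$; after the change of variables $v_k=\sum_{j\le k}u_j$ the system decouples into a few linear equations of size $O(|z|)$ and a pairwise-decoupled quadratic system $\sum_h Q(v_{1+3h},v_{2+3h})=t$, solvable with $|v_j|\le C|t|^{1/2}$ by the H\"ormander condition. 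This delivers differentiability at normal and abnormal points in one stroke, with no appeal to \cite{Morbidelli18}.
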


                          

\section{Preliminaries}\label{sezione2}

\paragraph*{Control distances.}
 Let $X_1,\dots, X_m$ be a family of   smooth   vector fields in $\R^n$.  Assume that  the vector fields are linearly independent at every point. A Lipschitz path $\gamma:[a,b]\to\R^n$  is said to be \emph{horizontal} if it satisfies almost everywhere in $[a,b]$ the ODE $\dot\gamma =\sum_{j=1}^m u_j(t) X_j(\gamma)$, where the \emph{control}  $u=(u_1, \dots, u_m)$ belongs to $  L^1((a,b),\R^m)$. 
 In such case,   define the subRiemannian length of $\gamma$ as $
  \length(\gamma):=\int_a^b|u(s)|ds
$ and 
given two points~$x$ and~$y\in\R^n$
 the \emph{subRiemannian distance}   
$  d(x,y)=\inf \{\length(\gamma)\}$, where the infimum is taken on all horizotal curves connecting $x$ and $y$.

\paragraph*{Carnot groups.} 
%
Let us recall the definition of  Carnot  group of step~$s\geq 2$. See~\cite[Section~1.4]{BonfiglioliLanconelliUguzzoni} for more details. Let $\G:=(\R^n,\cdot)$ be a Lie group with identity $0\in\R^n$. Assume that   $\R^n$ can be written as  $\R^n= \R^{m_1}\times\R^{m_2}\times\cdots\times\R^{m_s}\ni(x^{(1)},\dots, x^{(s)})$  and require that for all $\la>0$ the dilation map~$\delta_\la$ defined as
\begin{equation*}
x= (x^{(1)},x^{(2)},\dots, x^{(s)} )\mapsto
\d_\la(x):=(\lambda x^{(1)},\la^2 x^{(2)},\dots, \la^s x^{(s)} )
\end{equation*}
 is a group automorphism
 of $\G$ for all~$\la>0$. Let $m=m_1$ and let $X_1, X_2,\dots, X_m$ be the left-invariant vector fields such that $X_j= \p_{x_j}$ at the origin   for $j=1,\dots, m$.   
 We assume that the family $X_1, \dots, X_m$ satisfies the H\"ormander condition. 
 It is well known that the Lie algebra $\gg$ of $\G$ has a natural 
 stratification $\gg=V_1\oplus\cdots\oplus V_s$, where $V_1=\Span\{X_1,\dots, X_m\}$
  and $[V_1, V_j]=V_{j+1}$ for all $j\leq s-1$. Here $V_k$ denotes the span of the left invariant commutators of length~$k$. 
 
\paragraph{Carnot groups of step 2. }  
   Let us consider $(x^{(1)}, x^{(2)})= (x,t)\in \R^m_x\times\R^\ell_t$. Assume that we are given a map $Q=(Q^1,\dots, Q^\ell):\R^m\times\R^m\to \R^\ell$, bilinear and skew-symmetric. Assume also that 
   \begin{equation}
\label{horma}
\Span \{ Q(e_j, e_k):1\leq j<k\leq m\}=\R^\ell.
\end{equation} 
We can define the law
\begin{equation}\label{steppa} 
 (x,t)\cdot(\xi,\tau):=(x+\xi, t+\tau+Q(x,\xi)).
\end{equation}
The vector fields $\displaystyle  X_k= \p_{x_k}+\sum_{j=1,\dots, m,\; \a=1,\dots,\ell}  
Q^\a(e_j,e_k) x_j\p_{t_\a}$,   as $k=1,2,\dots, m$,    are  a basis of $V_1$   satisfying $X_k(0)=\p_{x_k}$.   
Another standard computation shows that the condition~\eqref{horma} ensures that the H\"ormander condition holds.   Namely  $\Span\{X_i, [X_j, X_k]:i,j,k=1,\dots, m \}=\R^n$ at any point.  

 The easiest example of two-step Carnot group is the \emph{Heisenberg group}, where $\R^m\times\R^\ell = \R^2\times \R$ and $Q((x_1, x_2), (\xi_1, \xi_2))= x_1\xi_2 - x_2\xi_1$.

 \paragraph*{Pansu differentiability.} It has been shown by Pansu~\cite{Pansu89}  that, given a   Carnot group $\G=(\R^n,\cdot)$ with dilations $\delta_\la$ and given a map $f : \G\to \R$   that   is Lipschitz-continuous with respect to the subRiemannian distance, then the map~$f$ is Carnot differentiable $\mathcal{L}^n$-almost everywhere. Namely, for almost all  $x\in\R^n$ there exists a~$\G$-linear map $T
 :\G\to\R$ such that 
 \begin{equation*}
\lim_{y\to 0}\frac{  f(x\cdot y)- f(x)- Ty}{d(0,y)}=0.
 \end{equation*}
Recall that a map $T:\G\to\R$ is said to be 
 $\G$-linear if it
satisfies $T(x\cdot y)= T(x) + T(y)$ and $T(\delta_\la x)= \la T(x)$ for all~$x,y\in\G$ and~$\la>0$.
   Since by elementary properties of metric spaces,  the distance function from a fixed set (or from a point)  is Lipschitz-continuous, Pansu's theorem of differentiability ensures that the distance function is Pansu differentiable almost everywhere.  
  
 \paragraph{Horizontal lines and  horizontally convex sets.} Let $X_1, \dots, X_m$ be the horizontal left-invariant vector fields on a Carnot group. Given  $u=(u_1, \dots, u_m)\in \R^m$ we denote by $u\cdot X:=\sum_{j=1}^m u_jX_j$. Note that any horizontal left-invariant vector field can be written in the form $u\cdot X$ for some $u\in\R^m$. A \emph{horizontal line} (briefly, a \emph{line}) is any set of the form 
   $  \ell:=\{  x\cdot\exp(s u\cdot X) :s\in\R \}$ for some $x\in\G$. 
    Observe that
  not all Euclidean lines are  horizontal lines.  On the 
  other side,  
  in Carnot groups of step at least three, it can happen that a line is not an 
  Euclidean line. 
  
  We say that the points $x$ and $y\in\G$ are \emph{horizontally aligned}  
if they belong to the same horizontal line $\ell  = \{\gamma(s):=\ol x\cdot(\exp(sV)):s\in\R\}$ for some $\ol x\in\R^n$ and $V\in V_1$. 
 A set $A\subset \G$ is \emph{horizontally convex} if for all horizontally aligned 
  points $x=\gamma(a) $ and~$ y=\gamma(b)\in\ell$, then   the horizontal 
  segment~$\gamma([a,b])$ connecting~$x$ and~$y$ is contained in $A$.
 
 \paragraph*{Multiexponentials.} 
 Given a Carnot group $\G$ with   a basis of left-invariant horizontal   vector fields~$X_1,\dots, X_m$, and a fixed   number $p\in\N$, we define for   all vectors  $u=(u_1, u_2, \dots, u_p)\in(\R^m)^p$, the map
 \begin{equation*}
  \Gamma^{(p)} (u_1, \dots, u_p):= \exp(u_1\cdot X)\cdot \exp(u_2\cdot X)\cdots\exp(u_p\cdot X) =e^{u_p\cdot X}\cdots e^{u_1\cdot X}(0),
 \end{equation*}
where $e^Z x$ denotes the value at time $t=1$ of the integral curve of $Z$ starting from $x$ at $t=0$, while $\exp:\gg\to \G$ denotes the exponential map of the Lie group theory. See~\cite{BonfiglioliLanconelliUguzzoni}. The map $\Gamma^{(p)}$ can be thought of as defined on the product $(V_1)^p$. 
  Finally, observe the dilation formula  
\begin{equation}
  \label{caracalla}\Gamma^{(p)}
  (\la u_1, \dots, \la u_p)=\delta_\la\big( \Gamma^{(p)}(u_1,\dots, u_p)\big),  
 \end{equation}
 for all $p\in\N$, $u_1, \dots, u_p\in\R^m$ and $\la>0$.   Formula 
 \eqref{caracalla} with $p=1$ follows from \cite[Lemma~1.3.27]{BonfiglioliLanconelliUguzzoni} and the general case is a consequence of the fact that $\delta_\la:\G\to\G$ is a morphism of $(\G,\cdot)$).

\begin{definition}  
Given a Carnot group $(\G, \cdot, \delta_\la)$, $\xi\in\R^m\setminus\{0\}$ and $p\in\N$, we say that  the
  $p$-th multiexponential   is   a submersion at $(\xi, \dots, \xi)$ if 
\begin{equation}\label{sommergo} 
 d\Gamma^{(p)}(\xi, \xi,\dots, \xi)
 :(\R^m)^p \to\G
 \end{equation}  
 is onto. We say that the $p$-th   multiexponential   is locally open at $(\xi,\dots, \xi)\in (\R^m)^p  $ if for all $\e>0$ there is $\s_\e>0$ such that 
 \begin{equation*}
  \Gamma^{(p)}\big(B_{\Eucl}(\xi,\xi, \dots, \xi),\e \big)\supset
  B_\Eucl\big( \Gamma^{(p)}(\xi,\xi, \dots, \xi),\s_\e\big) .
 \end{equation*}
 \end{definition}
 Well known   properties of  dilations   (see~\eqref{caracalla}) show that
$\Gamma^{(p)}$ is a submersion (respectively, locally open) at $(\xi, \dots, \xi)\in(\R^m)^p$ if and only if 
$\Gamma^{(p)}$ is a submersion (respectively, locally open) at $(\lambda\xi, \dots, \lambda \xi)\in(\R^m)^p$ for any $\lambda > 0$.

\begin{remark}
 \label{necesse} If condition \eqref{sommergo} holds for some $p\in\N$ and $\xi\in\R^m$, then the curve $\gamma(s)=\exp(s\xi\cdot X)$ is nonsingular in the sense of SubRiemannian control theory. To check this remark, it suffices to consider the end-point map $E: L^2
 ((0,1), \R^n)$ defined as follows. 
 Given $u\in L^2$ we put  $E(u) = \gamma(1)$, where $\gamma:[0,1]\to \R^n $   solves the problem $\dot\gamma(t)$$=\sum_{j=1}^m $$u_j(t) X_j(\gamma(t))$, a.e.~ and $\gamma(0)=0$. 
 Let us restrict the end point map~$E$ to the finite dimensional affine subspace 
 \begin{equation*}
\Sigma=\Bigl\{u\in L^2((0,1), \R^m) : u(t)=\xi+\sum_{j=1}^p
h_j\mathbb{1}_{[\frac{j-1}{p}, \frac jp]}(t),
\text{ with }(h_1, \dots, h_p)\in (\R^m)^p\Bigr\}.
 \end{equation*}
Let $\ol u\in L^2((0,1), \R^m)$, $\ol u(t)= \xi$  for $t\in(0,1)$.
Looking at the restriction of $E$ to $\Sigma$, {it} turns out easily that the image of $L^2$ through the differential $dE(\ol u) $ of the end point 
map contains   the image of $(\R^m)^p$  through $d\Gamma^{(p)}(\xi, \dots, \xi) $.
  Thus  \eqref{sommergo}  implies that $dE(\ol u):L^2\to \R^n$ is onto.
\end{remark}

 \paragraph*{M\'etivier groups.}
 A  two-step Carnot group, see~\eqref{steppa}, is said to be of M\'etivier type if for all $t\in\R^\ell$ and for all $x\neq 0$ there is a solution $y\in\R^m$ of the system $Q(x,y)= t$.
M\'etivier groups were introduced in \cite{Metivier80}. 
The most elementary example of M\'etivier group is  the Heisenberg group, while the easiest example of non-M\'etivier group is $\R^3_x\times\R$, with the map 
$Q((x_1, x_2, x_3), (y_1, y_2, y_3))= x_1y_2 -x_2 y_1$. Here, taking $x= (0,0,1)$, we see that $Q(x,y)=0$ for all $y$.

  Note that   in a two-step  group of M\'etivier type the map $\Gamma^{(2)}$ is   
  a submersion at any $
  (\xi,  \xi)\in(\R^m)^2$ with $\xi\neq 0$.    Indeed, differentiating the quadratic map $\Gamma^{(2)}(u,v)= (u+v, Q(u,v))$, we have
 \begin{equation*}
(u,v)\mapsto  d\Gamma^{(2)}(\xi,\xi)(u,v)= (u+v, Q(\xi, v)+ Q(u,\xi))= (u+v, Q(u-v, \xi)), 
 \end{equation*}
and   for all $\xi\neq 0$ this map is onto because the function $y\mapsto Q(y, \xi)$
 is onto.   Vice-versa, if in a two-step Carnot group  the M\'etivier condition fails, i.e. there is $\ol u\in \R^m$ such that  the map $  Q(\ol u,\cdot ):\R^m\to \R^\ell$ 
 is not onto, then, considering the constant control $u(t)=\ol u$ for $t\in[0,1]$, 
 by \cite[Proposition~3.4]{MontanariMorbidelli15},  we have $\operatorname{Im}dE(u)=\R^m\times\{  Q(\ol u, y):y\in\R^m\}$, which is a strict subset of  $\R^m\times\R^\ell$.   Then for all $p\in \N$, $\Gamma^{(p)}(\ol u, \dots, \ol u)$ is not a submersion, i.e.,~\eqref{subus} fails. 
  
 \section{ Multiexponentials in filiform groups. }\label{zibibbo} 
 In this section we introduce filiform Carnot groups and we 
  discuss  multiexponentials in that setting.
  As observed before the statement of Theorem~\ref{filetto}, we work on filiform groups of the first type.

Let us consider in $\R^{p+2}$ equipped with coordinates $(x,y, t_1, t_2, \dots, t_p)$ the vector fields
\begin{equation}\label{campetti} 
\begin{aligned}
&
  X =\p_x\quad \text{and}\quad Y=\p_y+ x \p_{t_1} + \frac{x ^2}{2}\p_{t_2}+\cdots + \frac{x^p}{p! }\p_{t_p}=\p_y+\sum_{ k=1}^p\frac{x^k}{k!}\p_{t_k}.
\end{aligned}
 \end{equation}
where $(z,t)= (x,y, t_1, \dots, t_p)\in\R^{p+2}$. 
 Let us denote $\ad_X Y:= [X,Y]$ and $\ad_X^k Y:= [X, \ad_X^{k-1}Y]$ for $k\geq2$.
 A computation shows that for $j=1,\dots, p$, we have  
\[
\begin{aligned}
\ad^j_{X }Y =\p_{t_j } +\sum_{k=j+1}^p \frac{x^{k-j}}{(k-j)!}\p_{t_k }.
\end{aligned}
 \]
In particular $ \ad^p_{X }Y = \p_{t_{p+1}}$. 
The vector fields $X$ and $Y$ generate a nilpotent  filiform Lie algebra of  step $p+1$.  
This is the structure denoted with $\mu_0$ in \cite[Corollaire  1, p.~93]{Vergne}.  
Defining in $\R^{p+2}=\R^2_{x,y}\times\R^p_t$ the binary law
\begin{equation}\label{groppo} 
\begin{aligned}
 (x,y,t)\cdot(\xi,\eta,\tau)
   = &\Big(x+\xi, y+\eta, 
t_1+\t_1+x\eta,t_2+\t_2+\frac{x^2}{2}\eta+ x\t_1,
\\& \qquad \qquad\dots, t_k+\t_k +\frac{x^k}{k!}\eta+ \sum_{j=1}^{k-1} \frac{x^{k-j}}{(k-j)!}\t_j,\dots \Big),
\end{aligned}
\end{equation} 
where $ k=2,\dots, p$, it turns out that $(\R^{p+2}, \cdot )$ is a Carnot group of step~$p+1$.
 
Particular familiar instances of filiform groups occur when $p=1$, and then we have the law
\begin{equation*}
\begin{aligned}
& (x,y,t)\cdot(\xi,\eta,\tau)= (x+\xi, y+\eta, t+\tau + x\eta ),\end{aligned}
\end{equation*}
 with horizontal  vector fields $ 
X=\p_x$  and $Y=\p_y+x\p_{t_1}$,
which after a linear change of variables becomes the familiar Heisenberg group. A second particular case is the so-called Engel group, which has step $p+1=3$ and whose group law is
\begin{equation*}
\begin{aligned}
 &(x,y,t_1, t_2)\cdot (\xi, \eta, \t_1, \t_2)= \Big(x+\xi, y+\eta, t_1+\t_1 + x\eta 
 ,t_2+\t_2+  x \t_1+\frac{x^2}{2}\eta \Big ),
\end{aligned}
 \end{equation*}
  with  horizontal vector fields $X=\p_x$ and $  Y=\p_y +x\p_{t_1}+\frac{x^2}{2}\p_{t_2}.$

The associative property of the law~\eqref{groppo} can be checked easily if we identify
\begin{equation}\label{amma} 
(x,y, t_1, t_2, \dots, t_p)\sim  \begin{bmatrix}
1 &  0 & 0& 0&\cdots &0
\\
y & 1& 0& 0& \cdots &0
\\
t_1 & x& 1& 0&\cdots &0
\\ 
t_2 & x^2/2  & x& 1& \cdots& 0
\\ t_3 &  x^3/ 3! & x^2/2 & x& \cdots &0
\\ \vdots &\vdots &\vdots &\vdots&\ddots & 0
\\ t_p & \frac{x^p}{p!} & \frac{x^{p-1}}{(p-1)!} &  \frac{x^{p-2}}{(p-2)!} & \cdots &1
 \end{bmatrix}\in\R^{(p+2)\times (p+2)}.
\end{equation} 
Under \eqref{amma}, the binary law~\eqref{groppo} can  be identified with  the matrix product. See \cite[Section~4.3.5 and~4.3.6]{BonfiglioliLanconelliUguzzoni}.

Define for 
$(w_1 , w_2 , \dots, w_q)\in  \R^{2q}$
\begin{equation}\label{pezzi2} 
 \Gamma^{(q)} (w_1, \dots, w_q) = e^{w_q\cdot Z }\cdots e^{w_1 \cdot Z}(0)
 =\exp(w_1\cdot Z)\cdot \exp(w_2\cdot Z)\cdots \cdot\exp(w_q\cdot Z)
\end{equation}
where  $w_k= (u_k,v_k)\in\R^2 $ and  $w_k\cdot Z=u_k  X+v_k Y$.

\begin{theorem} \label{cassetto}  
 Fix $p\in\N$ 
 and consider the vector fields in~\eqref{campetti}. Let $\zeta=(\xi ,\eta)\in\R^2$ such that $\xi \neq 0$. Then the map $\Gamma^{(p+1)}:\R^{2p+2}\to \R^{p+2} $ defined
 in~\eqref{pezzi2} 
 is a submersion at $(\zeta,\zeta,\dots, \zeta)\in   \R^{2p+2}  $. 
\end{theorem}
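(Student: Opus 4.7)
The plan is to compute the differential of $\Gamma^{(p+1)}$ at $(\zeta,\dots,\zeta)$, translate it back to the identity of $\G$ via $dL_{g^{-(p+1)}}$ (where $g := \exp(\zeta\cdot Z)$), and exhibit $p+2$ linearly independent elements in the resulting subset of $\gg$. The starting point is the standard derivative-of-the-exponential formula
\begin{equation*}
\tfrac{d}{d\epsilon}\big|_{\epsilon=0} \exp((\zeta+\epsilon h)\cdot Z) = dL_g\bigl(\Phi(\alpha)(h\cdot Z)\bigr),\qquad \Phi(x):=\tfrac{1-e^{-x}}{x},\ \alpha := \ad_{\zeta\cdot Z},
\end{equation*}
which in our nilpotent setting becomes polynomial since $\alpha^{p+1}=0$. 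Combining this with the chain rule applied to the product of exponentials and with the conjugation identity $\mathrm{Ad}_{g^{-j}}=e^{-j\alpha}$, one obtains
\begin{equation*}
 dL_{g^{-(p+1)}}\bigl[\partial_{w_k}\Gamma^{(p+1)}(\zeta,\dots,\zeta)(h_k)\bigr] = e^{-(p+1-k)\alpha}\,\Phi(\alpha)(h_k\cdot Z).
\end{equation*}

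With the basis $\{X,Y,E_2,\dots,E_{p+1}\}$ of $\gg$, where $E_j:=\ad_X^{j-1}Y$, the filiform relations (in particular $[Y,E_j]=0$ for $j\ge 2$, a direct check on the fields \eqref{campetti}) yield $\alpha(X)=-\eta E_2$, $\alpha(Y)=\xi E_2$, $\alpha(E_j)=\xi E_{j+1}$ for $2\le j\le p$, and $\alpha(E_{p+1})=0$. I would now plug in the $p+1$ vertical perturbations $h_k\cdot Z=Y$, $k=1,\dots,p+1$, and abbreviate $n:=p+1-k\in\{0,\dots,p\}$; the corresponding tangent vectors are
\begin{equation*}
v_k = e^{-n\alpha}\,\Phi(\alpha)(Y).
\end{equation*}
As a function of $n$ this is a $\gg$-valued polynomial of degree at most $p$, so by the classical Vandermonde argument its values at the $p+1$ distinct integers $n=0,1,\dots,p$ span the same subspace as its coefficients, namely $\Span\{\alpha^\ell\Phi(\alpha)(Y):0\le \ell\le p\}$.

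Using the identity $\alpha\,\Phi(\alpha)=1-e^{-\alpha}$, one computes for $\ell\ge 1$ that $\alpha^\ell\Phi(\alpha)(Y)=\xi^\ell E_{\ell+1}$ modulo $\Span\{E_{\ell+2},\dots,E_{p+1}\}$, while $\Phi(\alpha)(Y)=Y$ modulo $V_2\oplus\cdots\oplus V_{p+1}$. Written in the basis $\{Y,E_2,\dots,E_{p+1}\}$, these $p+1$ vectors form an upper-triangular matrix with diagonal $(1,\xi,\xi^2,\dots,\xi^p)$, which is nonzero precisely because $\xi\neq 0$. Hence $\{v_1,\dots,v_{p+1}\}$ spans the codimension-$1$ hyperplane $\Span\{Y,E_2,\dots,E_{p+1}\}$. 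To fill in the missing direction, I would take $h_1\cdot Z=X$: the resulting tangent vector $e^{-p\alpha}\Phi(\alpha)(X)$ has $X$-coefficient equal to $1$ and vanishing $Y$-coefficient (since $\alpha$ sends $X$ into $V_2\oplus\cdots\oplus V_{p+1}$), so it is independent of the previous $p+1$ vectors and the argument is complete. The main obstacle is the triangular-structure computation of this last paragraph: the hypothesis $\xi\ne 0$ is used exactly to guarantee that $\alpha$ acts nondegenerately along the chain $Y\mapsto E_2\mapsto\cdots\mapsto E_{p+1}$, which is what makes the diagonal nonzero.
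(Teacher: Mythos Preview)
Your proof is correct. Both you and the paper end up selecting the same $p+2$ partial derivatives (the $u_1$-direction together with all $v_1,\dots,v_{p+1}$-directions) and both reduce the rank computation to a Vandermonde-type argument, so at the level of linear algebra the two proofs agree. The genuine difference is in how the Jacobian is computed: the paper works in coordinates, writing out $\Gamma^{(p+1)}(w_1,\dots,w_{p+1})$ explicitly from the group law~\eqref{groppo}, extracting the relevant $(p+2)\times(p+2)$ minor, and recognizing that the determinant of the lower-right block equals (up to powers of~$\xi$) a Vandermonde determinant in the nodes $0,1,\dots,p$. You instead use the intrinsic formula $d\exp = dL_g\circ\Phi(\ad)$, pull back by $dL_{g^{-(p+1)}}$, and exploit that $e^{-n\alpha}\Phi(\alpha)(Y)$ is a degree-$p$ polynomial in the integer~$n$; the Vandermonde step then appears as the classical fact that values of such a polynomial at $p+1$ distinct points span the same space as its coefficients, and the triangular structure of $\alpha$ on the chain $Y\mapsto E_2\mapsto\cdots\mapsto E_{p+1}$ (with weights $\xi$) does the rest. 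Your approach is coordinate-free and would generalize more readily to other nilpotent algebras where the action of~$\alpha$ has a similar flag structure; the paper's approach is more elementary and requires no knowledge of the $d\exp$ formula, but is tied to the explicit coordinate presentation of the filiform group.
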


\begin{remark}
Note that   if $p\geq 2$ there is no $q\in\N$ such that the
map $\Gamma^{(q)}$  is a submersion at $((0,\eta), (0,\eta), \dots, (0,\eta))\in(\R^2)^q$ for some $\eta\neq 0$. Indeed, if this would  happen, 
  by Remark~\ref{necesse}  
we would contradict the well-known fact that the curve    $\gamma(s):= \exp(sY) $    is a singular extremal for the subRiemannian length minimization problem. See~\cite{LiuSussmann}.       
\end{remark}

\begin{proof}[Proof of Theorem \ref{cassetto}.]
We have to show that the linear map 
$d\Gamma^{(p+1)}(\zeta, \dots, \zeta):\R^{2(p+1)}\to \G$ is onto. We claim that the square matrix
 \begin{equation}\label{jacc} 
M(\zeta):=\left [\frac{\p \Gamma^{(p+1)}}{\p u_1},\frac{\p \Gamma^{(p+1)}}{\p v_1},\frac{\p \Gamma^{(p+1)}}{\p v_2},  
 \frac{\p \Gamma^{(p+1)}}{\p v_3 },\dots, \frac{\p \Gamma^{(p+1)}}{\p v_{p+1}}
 \right](\zeta, \zeta, \dots, \zeta)  \in\R^{(p+2)\times(p+2)}                                                                
 \end{equation}   is { non singular.} Since the matrix above is formed taking $p+2$ of the $2(p+1)$ columns of the Jacobian matrix, 
 the statement will follow immediately. 
 
 A first calculation shows that for $w=(u,v)\in\R^2$ and $(z,t)  =(x,y,t) 
 \in \R^2\times \R^p$ 
we have 
 \begin{equation*}
\begin{aligned}
  e^{w\cdot Z} (z,t) &= \bigg   ( x +u, y+v,  t_1 + v\int_0^1(x +s u)ds,
  t_2+ v \int_0^1\frac{(x +s u)^2}{2!} ds, \dots, 
\\& \qquad\qquad \dots,
t_p +v\int_0^1\frac{(x +s u)^p}{p!} ds\bigg).
  \end{aligned}
 \end{equation*}
 In particular 
\begin{equation}\label{esone} 
 \exp\Big(\xi X+\eta Y \Big)=\Big(\xi, \eta, \frac{\eta\xi}{2}, \frac{\eta\xi^2}{3!},
 \dots,\frac{\eta\xi^p}{(p+1)!}\Big).
\end{equation} 
Iterating the computation, we discover that the point $ \Gamma^{(p+1)}(w_1, \dots, w_{p+1})
\in\R^{ p+2 }$  takes the form 
\begin{equation*}
 \begin{bmatrix} 
u_1+ u_2 +\cdots +u_{p+1} 
\\
v_1 +v_2+ \cdots +v_{p+1} 
\\
v_1\displaystyle\int_0^1 s u_1 ds+ v_2\int_0^1(u_1+s u_2 )ds +\cdots 
+v_{p+1} \int_0^1(u_1+ u_2 +\cdots + s u_{p+1}) ds
\\
\displaystyle v_1\int_0^1 \frac{(su_1)^2}{2!} ds+ v_2\int_0^1\frac{(u_1+s u_2 )^2}{2!}ds +\cdots 
+v_{p+1} \int_0^1\frac{(u_1+ u_2+\cdots + s u_{p+1})^2}{2!} ds
\\ \vdots    
\\
 \displaystyle v_1\int_0^1 \frac{(su_1)^p}{p!} ds+ v_2\int_0^1\frac{(u_1+s u_2 )^p}{p!}ds +\cdots 
+v_{p+1} \int_0^1\frac{(u_1+ u_2+\cdots + s u_{p+1})^p}{p!} ds
 \end{bmatrix}
\end{equation*}
%
In order to calculate the matrix $M(\zeta)$, we write the first column in the  form 
$\frac{\p \Gamma^{(p+1)}}{\p u_1}(\xi, \dots ,\xi)=[1, *, \dots, *]^T$, where the terms $\ast$ are unimportant in the computation of the rank. All other variables $v_1, v_2,\dots, v_{p+1} $ appear linearly. Then it is easy to see that
 \begin{equation*}
  M(\xi,\eta) = 
  \begin{bmatrix}
1 &  0 &   0&\cdots & 0
\\ 
* & 1 & 1&\cdots & 1
\\
* & \xi\int_0^1 s ds & \xi\int_0^1(1+ s) ds &\cdots & \xi\int_0^1(p+s)ds
\\
\vdots & \vdots & \vdots &\ddots & \vdots 
\\ 
* & \xi^p\int_0^1 \frac{s^p}{p!} ds 
&  \xi^p\int_0^1 \frac{(1+s)^p}{p!}ds &\cdots & \xi^p \int_0^1
\frac{(p+s)^p} {p!}  ds
  \end{bmatrix}.
 \end{equation*}
In order to check the nonsingularity, we look at the submatrix obtained by  deleting the first row and column. Since $\xi\neq 0$, it suffices to check the nonsingularity 
 of the square  matrix of order $p+1$
 \begin{equation*}
\begin{aligned}
  \wh M : &= 
  \begin{bmatrix}
  1 & 1&\cdots & 1
\\
 2\int_0^1 s ds & 2\int_0^1(1+ s) ds &\cdots &  2\int_0^1(p+s)ds
\\
\vdots
\\ 
 (p+1) \int_0^1  s^p ds 
&  (p+1) \int_0^1  (1+s)^p ds & \cdots &(p+1) \int_0^1(p+s)^p ds
  \end{bmatrix}
  \\& =
  \begin{bmatrix}
1&1&1&\dots & 1
\\
1& 2^2-1^2 & 3^2 - 2^2&\dots &(p+1)^2 - p^2
\\ \vdots &\vdots &\vdots &\cdots&\vdots
\\ 1& 2^p-1^p & 3^p - 2^2&\dots &(p+1)^p- p^p
\end{bmatrix},
\end{aligned}
 \end{equation*}
 whose determinant, after some trivial column operations, is equal to a nonsingular Vandermonde determinant.  \end{proof}



\section{Inner cone property for horizontally convex sets }
  \subsection{Existence of inner cones to convex sets  }

  \begin{proof}[Proof of Theorem~\ref{teoremetto}]
 The proof of the inner cone property  \eqref{ggo}  is based on a 
 modification of  the arguments of \cite[Section~2.2, proof of Theorem~1.1]{Morbidelli18}.

 Let us consider $\ol x\in\p A$, assume that  for some $p\in\N$ and  $\xi\in\R^m\setminus
 \{0\}$,  the openness condition~\eqref{saba} holds. Assume also that we have  $\ol x\cdot \exp(p\xi\cdot X)\in \Int A$. This means that for some $\rho>0$ we have 
  $B(\ol x\cdot\Gamma^{(p)}( \xi,\dots, \xi),\rho)\subset \Int(A)$.  
By continuity, there is  $\e>0$ such that  if 
\begin{equation}\label{epsai} 
 \max\big\{d(\ol x,y ),|u_j-\xi|  :j=1,\dots, p\big\}<\e
\end{equation}
then
\[  y\cdot \Gamma^{ (p)}( u_1, u_2, \dots, u_{p-1}, u_p)\in B(\ol x\cdot\Gamma^{(p)}( \xi,\dots,\xi),\rho)\subset \Int(A).
\]

    We organize the proof in four steps.

    \step{Step 1.} 
    We claim that for all $x\in A$ with $d(x,\ol x)<\e$ we have 
   \begin{equation}
    \label{includo}
     x\cdot \Gamma^{(p)} (\lambda_1 w_1,\dots, \lambda_p w_p) \in B(\ol x\cdot 
    \Gamma^{(p)}( \xi,\dots, \xi),\rho)\subset \Int(A),
   \end{equation}  
for all   $\lambda_1,\dots, \la_p\in[0,p]$ such that $\sum_{j=1}^p\lambda_j=p$ and 
  $w_1,\dots, w_p$ such that $\max_j|w_j-\xi|<\e$. 
   
    Let us consider for $p\in\N$
    and 
    $C>0$ the set 
    \begin{equation*}
    \begin{aligned}
 K_C: & = \Big\{(x, \lambda_1, \lambda_2,\dots, \lambda_p, w_1, w_2, \dots, w_p)
     :
    |x |+\sum_{j=1}^p |w_j |\leq C , \lambda_j\geq 0 ,
    \sum_{j=1}^p\lambda_j=p
    \Big\}.         
    \end{aligned}
    \end{equation*}
For any $C>0$ the set $K_C$ is compact. Furthermore,  for any $C$, the function 
    \[\begin{aligned}
K_C \ni    (x,\lambda_1,\dots, \la_p, w_1, \dots, w_p) 
    &\mapsto
     x\cdot\exp(\la_1 w_1\cdot X)\cdot \cdots \cdot \exp(\la_p w_p\cdot X)
         \end{aligned}
 \]
depends  in a polynomial way  from its arguments. 
Therefore, given any
  $\ol x\in   \G$, $\xi \in\R^m$,
 $\la_1,\dots, \la_p\geq 0$ with $\sum_{j=1}^p\la_j=p$, there is $\sigma>0$ such that  
    \begin{equation}\label{quattrocchi} 
\Big| 
x \cdot   \Gamma^{(p)}  (\lambda_1 w_1 ,\la_2  w_2 , \dots, \la_p w_p )
- \ol x\cdot \Gamma^{(p)} (\lambda_1 \xi  , \lambda_2 \xi , \dots,\lambda_p \xi )
\Big|<\rho
    \end{equation} 
if $| x-\ol x|<\sigma$, $\lambda_j\geq 0$ for $j\in\{1,\dots, p\}$, with 
$\sum_j\lambda_j=p$ and $\max_j|w_j-\xi |<\sigma$.
Then the equality  \[\ol x\cdot \Gamma^{(p)} (\lambda_1 \xi  , \lambda_2 \xi , \dots,\lambda_p \xi )=\ol x\cdot \exp(p\xi \cdot X ) \]  and  a    choice of small $\e  $ in \eqref{epsai} gives the inclusion~\eqref{includo}.

\step{Step 2.} We claim that for all $x\in   A $ with $d(x,\ol x) < \e$ and for all $\lambda\in\left]0,1\right]$ we have 
\begin{equation*}
\Big\{  x\cdot\Gamma^{(p)}(\lambda u_1,\lambda u_2,\dots, \lambda u_p) :
\max_{  j\in\{1,\dots,p\} }|u_j-\xi|<\e\Big\}\subset A.
\end{equation*}
The proof is the same  presented in \cite{Morbidelli18} and works as follows. Let us look at any  point $x \in A$ such that $d(x ,\ol x)<\e $. Consider also the point  $x\cdot\Gamma^{(p)}
(pu_1,0,\dots,  0)$. This point belongs to $A$ and is aligned with $x\in A$. Then the horizontal segment connecting these two points, and in particular the point $ x\cdot\Gamma^{(p)}(\lambda u_1,0,\dots,  0) $ belongs to $A$. 

Next we repeat the  argument considering the pair of  points $ x \cdot\Gamma^{(p)}(\lambda u_1,0,\dots,  0)\in A$
and $ x \cdot\Gamma^{(p)}(\lambda u_1,(p-\lambda)u_2,0,\dots,  0)$, which belongs to~$A$ by Step 1. Since both these points belong to $A$ and are aligned, 
we deduce that the horizontal segment connecting them is contained in $A$. In particular $x\cdot \Gamma^{(p)}(\lambda u_1,\lambda u_2,0,\dots,  0)$. An iteration of the argument completes the proof of Step 2.

\step{Step 3.} Following \cite{Morbidelli18}, by   dilation and translation arguments in Carnot groups,  for a suitable $\delta_0>0$ we get the inclusion 
\begin{equation}\label{caracollo} 
\begin{aligned}
 \big\{x \cdot \Gamma^{(p)}(\lambda u_1,\dots, \lambda  u_p) & : \lambda\in\left]0,1\right] 
 \text{ and }\max_{1\leq j\leq p}|u_j-\xi |<\e\big\}
 \\&\supset \bigcup_{\lambda\in  \,  ] 0,1  ]}B\Big(x\cdot\exp(\lambda p\xi\cdot X), \delta_0\lambda\Big),
\end{aligned}
\end{equation}
which gives ultimately the proof of~\eqref{ggo}.  

  Let us check the desired inclusion following \cite{Morbidelli18}. 
 Let $x\in A$ with $d(x,\ol x)<\e$. 
 Then denoting by $L_x:\G\to \G$ the left translation,  $L_x y=x\cdot y $ for all $x,y\in\G$, starting from~\eqref{caracalla}, 
 we have
 \begin{equation*}
\begin{aligned}
 L_x\Big(\Big\{ & \Gamma^{(p)}(\la u_1, \dots, \la u_p):\la\in\left]0,1\right]\;\text{and }\max_{j\leq p}|u_j-\xi|<\e\Big\}\Big) 
 \\&= L_x\Big(\bigcup_{0<\la\leq 1 }\delta_\la
 \Big\{   \Gamma^{(p)}(  u_1, \dots,   u_p): \max_{j\leq p}|u_j-\xi|<\e\Big\}\Big)
 \\&\supseteq \bigcup_{0<\la\leq 1} L_x\big(\delta_\la B_\Eucl(\Gamma^{(p)}(\xi,\dots,\xi),\s_\e)\big)
 \supseteq \bigcup_{0<\la\leq 1} L_x\big(\delta_\la B (\Gamma^{(p)}(\xi,\dots,\xi),c_0 \s_\e)\big).
\end{aligned}
 \end{equation*}
The penultimate inclusion follows from  the local openness assumption. The last is a consequence of the standard local estimates for  distances defined by vector fields. \footnote{ If $d$ is the subRiemannian distance defined by a given family of $C^1$ vector fields $X_1, \dots, X_m$ and $B$ denotes the corresponding ball, then for any compact set $K\subset\R^n$ there is $c_0>0$ such that $B_\Eucl(y, r)\supseteq B(y, c_0 r) $ for all $r\leq c_0$ and $y\in K$.   }
 Thus,   since $\delta_\la \Gamma^{(p)}(\xi, \dots, \xi)=\exp(\la p\xi \cdot X)$,  
 we have finished the proof of~\eqref{caracollo}.

\step{Step 4.} Until now we proved the inner cone inclusion for vertices  $x\in A$. 
By an approximation argument, we can approximate any point $x\in\p A$
with $d(x,\ol x)<\e$ with a family $x_n\in A$ for all $n\in\N$ such that $x_n\to x$ as 
$n\to \infty$. Since the aperture of the cones are stable  as $n\in\N$, we get inclusion~\eqref{ggo} for $x\in\p A$. Note that we are not assuming that $A$ is closed.  
 \end{proof}

  \subsection{Examples     of failure of the cone property     -- the filiform case}\label{ggiorgio} 
  In this section we  consider the pair of vector fields   
  \begin{equation*}
   X=\p_x \quad \text{and }\quad Y=\p_y + x\p_{t_1}+\cdots +\frac{x^p}{p!}\p_{t_p}
  \end{equation*}
described in Section~\ref{zibibbo}. We look at the   direction $Y$ and we  show an example where   Theorem~\ref{teoremetto} fails at that direction, for some convex sets.   This gives also an indirect     proof of the fact that the    for any $p\in\N$, the $p$-th multiexponential cannot be locally open  
at $((0,1), \dots, (0,1))\in (\R^2)^p$.   

%

\begin{example} \label{quattrouno} 
Let $p\geq 2$. Assume first that $p$ is even and let us look  at the set 
\begin{equation}\label{contraerea} 
   E=\{(x,y, t_1, \dots, t_p)\in\R^{p+2}: F(x,y,t):= t_p +y^{p+2}     
   \mathbb{1}_{\left[0,+\infty\right[}(y)\geq 0\}.                                                                 
   \end{equation} 
It is easy to check that  $XF=0$ identically, and  
\begin{equation}\label{gs} 
YF(x,y,t_1, t_2, 
\dots, t_p)=\frac{x^p}{p!}+ (p+2)y^{p+1}\mathbb{1}_{\left[0,+\infty\right[}(y)
\geq 0,
\end{equation}  
because  $p$ is even.   It follows that  both $E$ and $E^c$ are  horizontally convex. 
(The set has also constant horizontal normal, see~\cite{FSSC03,BaroneAdesiSerraCassanoVittone,BellettiniLedonne12} and the 
 references therein for the related definition).

If we consider the point $P=0\in \p E$,  the point $Q:=\exp(Y)=(0,1,0,\dots, 0)\in\Int(E)$ and  the curve $\gamma(s)=\exp (sY) = (0,s,0,\dots, 0)$, it turns out that $\gamma(s)\in\p E $ for all $s\leq 0$ and $\gamma(s)\in\Int(E)$ for all $s>0$. However for any $\e>0$ and $s_0>0$ the inclusion 
  \[
\bigcup_{0<s<s_0}   B((0,s,0,0,\dots, 0), \e s)\subset E
  \]
fails. Indeed, by the translation law~\eqref{groppo} and the standard ball-box  
theorem, $B((0,s,0,\dots),\e s)$ contains all points of the form 
$P_s:=(0,s,0,0,\dots, ,-c(s\e)^{p+1})$ for some universal $c>0$. Instead, the point $P_s$  can not belong to the set $E$ for $s$ belonging to any   nontrivial interval with left extremum $0\in\R$.

Even more strikingly, if we choose $P=\exp(-Y)=(0, -1,0, \dots)\in \p E$ and $Q=P\exp(2Y)= (0,1,0,\dots)\in \Int(E)$, we see that even the much weaker qualitative  property $\{\exp(sY):s\in \left]-1,1\right[\}\subset \Int(E)$ fails.
\end{example}
  
  If $p\geq 3$ is odd then the set in \eqref{contraerea}   
  is not  horizontally convex.    To check this claim it suffices to take 
  $  \gamma(s)=\exp(s(-X+Y))=(-s, s,-\frac{s^2}{2}, \frac{s^3}{3!},-\frac{s^4}{4!}, \dots, -\frac{s^{p+1}}{(p+1)!})$,
  by \eqref{esone}.  It is easy to see that the    path $\gamma$ satisfies $\gamma(0)\in E$, $\gamma(1/(p+1)!)\in E$ and 
  $\gamma\Big(\bigl]0,\frac{1}{(p+1)!}\bigr[\Big)\subset E^c$.
 However  the discussion concerning the set defined in~\eqref{contraerea} 
can be modified by taking     
\[
E=\{ t_{p-1}+ y^{p+1}\mathbb{1}_{\left[0,+\infty\right[}(y)\geq 0\}
\]
and arguing as above.

\begin{remark} \label{quatt} 
 In the Engel group $\mathbb{E}=\R^4$ with  vector fields $X_1=\p_1$ and $X_2 = \p_2+ x_1\p_3 +\frac{x_1^2}{2}\p_4$,    and group law
 \begin{equation}\label{grl} 
  x\cdot y=\Big(x_1+y_1, x_2+y_2, x_3 + y_3 + x_1 y_2, x_4+y_4 +\frac{x_1^2}{2}y_2+x_1y_3\Big),
 \end{equation}  
 the analogous example is given by $x_4>\psi(x_2)$ with $\psi_2'\leq 0$. See \cite{BellettiniLedonne12} where many examples of constant horizontal normal sets are exhibited. In such case a  counterexample to the cone property is given by $E=\{x_4 > - x_2^4 \mathbb{1}_{\left[0,+\infty\right[}(x_2)\}$ where the inner cone property does not hold.
\end{remark}

\begin{remark}
Let us observe that the failure of the cone property at some direction $V\in V_1$, as in the examples discussed here, does not imply that the curve $\gamma(s)=\exp(sV)$ is $C^1$-rigid in the sense of Bryant and Hsu~\cite{BryantHsu}. In this regard, given the Engel group~$\mathbb{E}$ of Remark~\ref{quatt},  let us consider the direct product $\G=\R\times\mathbb{E}\ni(x_0, x)$ with group law 
$ (x_0, x)\cdot (y_0, y):= (x_0+y_0,x\cdot y )$ and with horizontal vector fields
$ X_0=\p_{0}, 
$ 
$X_1=\p_1$ and $X_2 = \p_2+ x_1\p_3 +\frac{x_1^2}{2}\p_4$.
It is easy to see that the set 
\begin{equation*}
 E=\{(x_0, x)\in \R\times \mathbb{E}: x_4> -x_2^4\mathbb{1}_{\left[0,+\infty\right[}(x_2) \}
\end{equation*}
does not satisfy the cone property in the direction $X_2$,  (see Example~\ref{quattrouno}), but the curve $\gamma(s)=\exp(sX_2)=(0, (0,s,0,0))$ with $s\in[0,1]$ is not $C^1$-rigid, because it can be perturbed smoothly with horizontal curves of the form
\begin{equation*}
\wt\gamma(s)=(\eta(s), (0,s,0,0)) ,
\end{equation*}
with $\eta $ is an arbitrary smooth, compactly supported  function in $\left]0,1\right[$.

\end{remark}
 
  \subsection{  Examples     of failure of the cone property    -- the free group
  of step three and  rank  two  }
  \label{gggiorgio} Here we show in the model of free three-step Carnot group with two generators  an example where  Theorem~\ref{teoremetto} fails. 
 The following class of examples are  minor modifications of the examples of Section~\ref{ggiorgio}.    
  
  Consider in $\R^5$ with variables $(x_1, x_2, x_3, x_4, x_5)$ the vector fields 
  \begin{equation}
   X_1=\p_1-\frac{x_2}{2}\p_3 -\frac {x_1^2+x_2^2}{2}\; \p_5
   \quad\text{and}\quad X_2=\p_2+\frac{x_1}{2}\p_3 +\frac{x_1^2+x_2^2}{2}\;\p_4
  \end{equation} 
  which together with their commutators
  \begin{equation*}
  X_3:= [X_1, X_2]=\p_3+x_1\p_4+x_2\p_5,\quad
   X_4:=[X_1, X_3]=\p_4\quad\text{and}\quad X_5:=[X_2, X_3]=\p_5
  \end{equation*}
generate the free Lie algebra of step three with two generators and are left invariant with respect to the law
\begin{equation}
\begin{aligned}
 x\cdot y=\bigg(  & x_1+y_1, x_2+ y_2, x_3 + y_3 +\frac 12(x_1y_2 -x_2 y_1), 
 \\& x_4+ y_4 +\frac {y_2}{2}  (x_1^2+x_2^2+x_1y_1+x_2 y_2) + x_1 y_3,
\\& x_5+ y_5 -\frac{y_1}{2}(x_1^2+x_2^2 + x_1 y_1 + x_2 y_2 ) +  x_2 y_3\bigg).
\end{aligned}
\end{equation} 
This model has been studied by Sachkov~\cite{Sachkov03}  ~\cite{ArdentovLeDonneSachkov19}.

A standard computation gives for all $(\xi_1, \xi_2)\in\R^2$ 
\begin{equation*}
 \exp\Big(s(\xi_1 X_1+\xi_2 X_2)\Big)=\Big(
 \xi_1 s, \xi_2 s, 0, \frac{\xi_2}{6}(\xi_1^2+\xi_2^2)s^3, -\frac{\xi_1}{6}
 (\xi_1^2+\xi_2^2)s^3
 \Big).
\end{equation*}
It is well known that in this model all integral curves $\gamma(s) = \exp(s(\xi_1 X_1+\xi_2 X_2))$ 
are normal and abnormal minimizers.  
Therefore the construction of the multiexponential map does not provide the inner 
cone property.  In the following discussion we present some examples of sets where 
inclusion \eqref{ggo} fails. 

\begin{lemma}
 Let $\psi:\R\to\R$ be a nonincreasing  regular function.
 Then, for any fixed  unit vector   $\xi:=(\xi_1, \xi_2)\in\R^2$, 
 the set 
 \begin{equation}\label{esepp} 
  E:= \Big\{x= (x_1, \dots, x_5):  F(x_1, \dots, x_5):= \xi_2 x_4 - \xi_1 x_5  -\frac{\scalar{\xi}{x}^3}{6}-\psi(\scalar{\xi}{x})  
   >0 \Big\}
 \end{equation}   together with its complementary $E^c$ 
 is horizontally convex.  
\end{lemma}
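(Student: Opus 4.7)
The plan is to show that along any horizontal line the function $F$ is monotone, which immediately implies horizontal convexity of both $\{F>0\}$ and $\{F\leq 0\}$.

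First I would compute $X_1 F$ and $X_2 F$ directly. Since $F$ depends only on $(x_1,x_2,x_4,x_5)$, the only nontrivial contributions come from $\partial_1, \partial_2, \partial_4, \partial_5$. Writing $\langle\xi,x\rangle$ for $\xi_1 x_1+\xi_2 x_2$ and $|x'|^2=x_1^2+x_2^2$, a routine computation should give the matching identities
\[
X_j F \;=\; \xi_j\,\Phi(x), \qquad j=1,2,
\]
where
\[
\Phi(x) \;:=\; \tfrac{1}{2}\bigl(|x'|^2-\langle\xi,x\rangle^2\bigr) \;-\; \psi'(\langle\xi,x\rangle).
\]
The cancellations here are the reason the cubic term $-\tfrac{1}{6}\langle\xi,x\rangle^3$ was introduced in the definition of $F$: the first-order derivative $-\tfrac{\xi_j}{2}\langle\xi,x\rangle^2$ is exactly compensated against the contributions coming from the second-order parts of $X_1,X_2$ applied to $\xi_2 x_4-\xi_1 x_5$.

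Next I would observe that $\Phi(x)\geq 0$ everywhere. Indeed, since $|\xi|=1$, Cauchy--Schwarz yields $\langle\xi,x\rangle^2\leq |x'|^2$, so the quadratic part is nonnegative; and $\psi$ nonincreasing gives $-\psi'\geq 0$.

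Now let $\gamma(t)=\ol x\cdot\exp\bigl(t(u_1 X_1+u_2 X_2)\bigr)$ be an arbitrary horizontal line. Then $\dot\gamma=u_1 X_1(\gamma)+u_2 X_2(\gamma)$ with $u=(u_1,u_2)$ constant, so
\[
\frac{d}{dt}F(\gamma(t)) \;=\; u_1\,X_1F(\gamma(t))+u_2\,X_2F(\gamma(t)) \;=\; \langle u,\xi\rangle\,\Phi(\gamma(t)).
\]
Since $\Phi\geq 0$, the right-hand side has constant sign (equal to the sign of $\langle u,\xi\rangle$), hence $t\mapsto F(\gamma(t))$ is monotone on all of $\R$.

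The conclusion is then immediate. If $\gamma(a),\gamma(b)\in E$, i.e.\ $F(\gamma(a))>0$ and $F(\gamma(b))>0$, monotonicity of $F\circ\gamma$ on $[a,b]$ forces $F(\gamma(t))\geq \min\{F(\gamma(a)),F(\gamma(b))\}>0$ for all $t\in[a,b]$, so $\gamma([a,b])\subset E$. The same argument with reversed inequalities, applied to $E^c=\{F\leq 0\}$, shows that $E^c$ is horizontally convex as well. I do not expect a real obstacle: the only delicate point is verifying the two identities $X_j F=\xi_j\Phi$ without sign errors; everything else is monotonicity plus Cauchy--Schwarz.
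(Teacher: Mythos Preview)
Your proposal is correct and essentially the same as the paper's. The paper computes the two combinations $(-\xi_2 X_1+\xi_1 X_2)F=0$ and $(\xi_1 X_1+\xi_2 X_2)F=|\xi|^2\Phi\geq 0$ and then appeals to the constant horizontal normal property; your identities $X_jF=\xi_j\Phi$ are exactly equivalent (and in fact imply both of the paper's formulas), and your monotonicity-along-lines argument is just the constant horizontal normal conclusion spelled out explicitly.
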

  Note that it turns out from the proof that the set has constant horizontal normal too.   In  the statement  and below we denoted $\scalar{\xi}{x}=\xi_1 x_1+\xi_2 x_2$.
\begin{proof}
 Let $F(x)=\quad\xi_2 x_4 - \xi_1 x_5  -\frac{\scalar{\xi}{x}^3}{6}-\psi(\scalar{\xi}{x})$. A trivial computation shows that  
 \begin{equation*}
\begin{aligned}
  (-\xi_2 X_1 + \xi_1 X_2 )F &=0\quad\text{ and }\quad 
  \\( \xi_1 X_1 +\xi_2 X_2) F & = \frac{\abs{\xi}^2}{2}\Big( x_1^2 + x_2^2  -\scalar{x}{\xi}^2\Big )-\abs{\xi}^2\psi'\big(\scalar{x}{\xi}\big)\geq 0,
\end{aligned}
 \end{equation*}
because $\psi$ is nonincreasing and $|\xi|=1$. Therefore the set has constant horizontal normal and   in particular both~$E$ and~$E^c$ are horizontally convex.
\end{proof}

\begin{example}
 Let $\xi\in\R^2$ be a unit vector and let us consider the set $E$ defined in \eqref{esepp}. Let us choose the function $\psi(t)= -t^4 \mathbb{1}_{\left[ 0,+\infty\right[}(t)$, so that the set $E$ becomes 
 \begin{equation*}
  E:=\Big\{x: \xi_2 x_4 -\xi_1 x_5-\frac{\langle\xi,x\rangle^3 }{6} + 
  \langle\xi,x\rangle^4\mathbb{1}_{\left[0, +\infty\right[}(\langle \xi, x\rangle)>0 
  \Big\}.
 \end{equation*}
  Here the origin $0$ belongs to $ \p E$, while 
 \[
  \exp(s\xi\cdot X)=\Big(s\xi_1, s\xi_2, 0, \frac{\xi_2}{6}s^3,-\frac{\xi_1}{6}s^3  
  \Big)\in\Int (E) \quad \text{ for all $s>0$.}
 \]
Assume that there exist positive numbers
$\e $ and $ s_0 $ such that 
\[
C_{\e, s_0} := \bigcup_{0<s<s_0}B\Big ( \exp(s\xi\cdot X) ,\e s\Big)\subset E
\]
for all $s>0$. We claim that this gives a contradiction. 
The cone $C_{\e, s_0}$  must contain all points of the form $\exp(s(\xi_1 X_1+\xi_2 X_2)) 
\cdot (\e s u_1,\e s u_2,\e^2 s^2 u_3 , \e^3 s ^3u_4,    \e^3 s^3 u_5)$, where $|u|\leq c$ and $c>0$  is an absolute constant. In particular,  
%
\begin{equation*}
  \begin{aligned}
C_{\e, s_0}  \supseteq   &\Big(s\xi_1, s\xi_2, 0, \frac{\xi_2}{6}s^3,-\frac{\xi_1}{6}s^3  \Big)
 \cdot (0,0, 0, - c\xi_2  \e^3 s ^3,  c\xi_1  \e^3 s ^3)
 \\&    
 =\Big(s\xi_1, s\xi_2, 0, \xi_2 s^3\Big(\frac{1}{6}-c\e^3\Big), -\xi_1 s^3\Big(
 \frac 16-c\e^3\Big) \Big)=:\gamma(s),\end{aligned}
\end{equation*}
where we  recall again that $\xi_1^2+\xi_2^2=1$.  
An elementary computation shows that   for $s>0$ we have    $\gamma(s)\in E$ if and only if 
$-c\e^3 s^3 + s^4>0$ and this inequality fails for $s\in\left]0, c\e^3\right[$.
In other words  for any $\e>0$ fixed, the point $\gamma(s)$  does not belong to the set~$E$ defined in~\eqref{esepp}   for positive $s$  close to $0$.
\end{example}

\section{Differentiability of the distance}
\label{dudu} 
\subsection{Proof of Theorem \texorpdfstring{\ref{oppiaceo}}{14} }

%
%
 \begin{proof}[Proof of Theorem~\ref{oppiaceo}]
Let $(\R^n,\cdot)$ be a Carnot group of step $s$. Write $x=(x^1, x^2, \dots, x^s)
\in\R^n = \R^{m_1}\times\R^{m_2}\times\cdots\times\R^{m_s}$. Denote for brevity $m=m_1$.  Assume that $d\Gamma^{(p)}(\xi,\xi,\dots,\xi):(\R^m)^p\to\R^n$  is onto for some given $\xi\in\R^m$.   This is equivalent to require that the map 
$d\Gamma^{(p)}(\lambda \xi,\lambda\xi,\dots,\lambda \xi):(\R^m)^p\to\R^n$
 is onto for any $\lambda>0$.    Let $w=:p\xi$. We want to show that 
\begin{equation}\label{dof} 
 d\Big(\exp(w\cdot X)\cdot x\Big)= d\Big(\exp(w\cdot X) \Big)+\Big\langle\frac{w}{|w|},x^1 \Big\rangle_{\R^m}+o(d(x)), 
\end{equation} 
as $x\to 0\in\R^n$.  We adopt here and hereafter the standard notation $d(x):=d(0,x)$. In~\cite[Lemma~3.2]
{LeDonnePinamontiSpeight17} it has been proved  
that the lower estimate $\geq$ in~\eqref{dof} holds in any Carnot group  of arbitrary step and for all choice of $\xi\in\R^m\setminus\{0\}$. Therefore, we discuss here the upper estimate only. If~\eqref{dof} holds, then this  means that  the distance from the origin is Pansu differentiable at $\exp(w\cdot X)$ and its differential  is the map $T:\G\to \R$ defined by $T(x^1,\dots, ,x^s)=\big\langle\frac{w}{|w|}, x^1 \big\rangle_{\R^m}$. This explicit formula shows that  the differential is the same at any point $\exp(\lambda w\cdot X)$ for any~$\lambda>0$,   as it happens in the Euclidean case.

Let us discuss  the upper estimate in \eqref{dof}.  Let $\frac{w}{p}=\xi $.   Look at the map 
  \begin{equation}
\begin{aligned} (\R^m)^p \ni &  (\a_1,\a_2,\dots,\a_p)
\longmapsto F(\a_1,\a_2,\dots,\a_p )
\\  
&:=
\exp\big((\xi+\a_1)\cdot X\big)\cdots \exp\big((\xi+\a_p)\cdot X\big)\in\R^n.
\end{aligned}
  \end{equation} 
 Note that $F(0)=\exp(w\cdot X)$.    Since $dF(0,\dots, 0)$ is onto, there is a $n$-dimensional subspace $V\subset(\R^m)^p$ such that  $ dF(0) \Big|_V:V\to \R^n $ is invertible. By the inverse function theorem there is a neighborhood $U$ of the origin in $\R^n$ such that   
for all $x\in U$ the system of equations 
\begin{equation}\label{sisto} 
\begin{aligned}
&\exp\big((\xi+\a_1)\cdot X\big)\cdots \exp\big((\xi+\a_p)\cdot X\big) 
= \exp(w\cdot X)\cdot x
\end{aligned}
\end{equation}
has a unique solution $( \ol\a_1,\dots   ,\ol\a_p)\in V$  
which satisfies,   for suitable constants $C$ and $\wh C>0$,  
\begin{equation}\label{upload} 
\begin{aligned}
 \abs{(\ol\a_1, \dots,\ol \a_{p })}_{\Eucl}
 & \leq C \;\bigl |\exp(w\cdot X)\cdot x -\exp(w\cdot X)   \bigr|_{\Eucl}
 \\&  \leq \; \wh C d  \Big( \exp(w\cdot X)\cdot x, \exp(w\cdot X) \Big)
 =\wh C d(x),
\end{aligned}
\end{equation}
by standard subRiemannian facts.   In the formula above, we denote by $|\cdot|_{\Eucl}$ the Euclidean norm.   

By definition of distance we have
\begin{equation*}
\begin{aligned}
 d\Big( &\exp (w\cdot X)\cdot x \Big)
\leq 
\sum_{j=1}^p|\xi+\ol \a_j|=p|\xi|+\bigg\langle \frac{\xi}{|\xi|}, \sum_{j=1}^p \ol\a_j \bigg \rangle+O(|\ol\a|^2),
\end{aligned}
\end{equation*}
by the Taylor formula, as $x\to  0 $.
  Formula~\eqref{upload} tells that    $O(|\ol\a|^2)=O(d(x    )^2)$.   

 Recall also that $d(\exp(w\cdot X))=\abs{w}= p  \abs{\xi }$.  
 A look to the first $m$ equations of the system~\eqref{sisto} gives also the equality $\sum_{j=1}^p\ol\a_j =x^1\in\R^m$. Therefore, we have obtained the inequality
\begin{equation*}
d\big(\exp(w\cdot X)\cdot x\big)\leq d(\exp(w\cdot X))+\bigg\langle \frac{w}{|w|}, x^1\bigg\rangle_{\R^m}+O(d(x)^2), 
\end{equation*}
which concludes the proof. \end{proof}

 \subsection{The step-two case}\label{duofold} 
Here we prove Theorem \ref{pasodos}, stating that in Carnot groups of step two the subRiemannian distance is differentiable at any point $\exp(W)$ for any $W\in V_1$. 
 As we already observed, here we are   able to get the differentiability  also when $s\mapsto \exp(sW)$ is a singular subRiemannian length-minimizer. The theorem was first proved  in \cite{LeDonnePinamontiSpeight17}, but our proof relies on a different argument.

Let us consider a Carnot group of step two. Namely, equip $\R_z^m\times\R_t^\ell$  
with the group law~\eqref{steppa}
 \begin{equation*}
  (z,t)\cdot (\z,\tau)=(z+\z, t+\tau + Q(z,\z))
  \in\R^m\times\R^\ell.
 \end{equation*} 
 See Section~\ref{sezione2} for further details.  In the sequel we will use several times the fact that the bilinear function $Q$ satisfies the alternating property $Q(z,z)=0$ for all $z\in\R^m$.

An easy computation based on the skew-symmetry of $Q$ gives $(w,0)=\exp (w\cdot X)$, where $w\in\R^m$ and $w\cdot X:=\sum_{j=1}^m w_jX_j$.  For any $w\in\R^m\setminus\{0\}$    we want to get the estimate  
\begin{equation}\label{addis} 
 d((w,0)\cdot(z,t))\leq    d(w,0)  +\Big\langle \frac{w}{|w|}, z\Big\rangle + o(d(z,t))\quad \text{ as $(z,t)\to (0,0)$}.
\end{equation} 
%
 Recall again that the opposite inequality holds in general Carnot groups, see~\cite[Lemma~3.2]{LeDonnePinamontiSpeight17} and~\cite[Lemma~2.11]{PinamontiSpeight18}.
 
 In order to prove \eqref{addis}, we analyze the multiexponential map
 \begin{equation*}
   \Gamma^{(p)}(u_1,\dots, u_p):=\exp(u_1\cdot X)\cdots \exp(u_p\cdot X) =\Bigl(\sum_{j\le p}u_j,
 \sum_{1\le j<k\le p}Q(u_j,u_k)
 \Bigr),
 \end{equation*} 
where $p\in\N$ will be chosen later on,  and the vectors $u_1, \dots, u_p$ belong to  $\R^m$.

Our purpose is to analyze the system $\Gamma^{(p)}(\xi + u_1, \dots, \xi+u_p)=(w,0)\cdot (z,t)$,  where $ \xi:=\frac{w}{p}$, in order to get the upper estimate~\eqref{addis}.
Using the group law  we get the set of equations
 \begin{equation}\label{twostep} 
  \Bigl(\sum_{j\le p}(\xi+u_j),
 \sum_{1\le j<k\le p}Q(\xi+u_j,\xi+u_k)
 \Bigr) =(w+z,t + Q(w,z) ).
 \end{equation} 
 After a short manipulation, we get
\begin{equation}\label{jowo} 
 \left\{
 \begin{aligned}
 & \sum_{j=1}^p u_j = z 
 \\
 &Q\Big(\sum_{j=1}^p (p-2j+1)u_j, \xi\Big) +\sum_{1\leq j<k\leq p}Q(u_j, u_k)
 = t+Q(p\xi, z).
        \end{aligned}
 \right.
\end{equation} 
By definition of subRiemannian distance, a solution $u_1, \dots, u_p$ of~\eqref{jowo} provides immediately the estimate $d((w,0)\cdot(z,t)) \leq \sum_j|\xi+u_j|$. 
Besides this trivial remark, 
the key point in the proof of \eqref{addis} is the following proposition.
\begin{proposition}\label{appro} 
There  are $p\in\N$ and $C>0$ such that for all   $\xi\in\R^m$  and for each 
   $(z,t)\in\R^m\times\R^\ell$, the system~\eqref{jowo} has a solution  $(u_1, \dots, u_p)$ satisfying the inequality 
\begin{equation}\label{jowone} 
 \sum_{j=1}^p|u_j|\leq C(|z|+|t|^{1/2}).
\end{equation} 
\end{proposition}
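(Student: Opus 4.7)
The plan is to first eliminate $\xi$ from \eqref{jowo} by adding a convenient linear condition on the $u_j$'s, and then to solve the resulting $\xi$-free system via a scaling-and-compactness argument combined with the nondegeneracy of the bilinear form $Q$.

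\emph{Step 1 (elimination of $\xi$).} The term $Q\bigl(\sum_j(p-2j+1)u_j,\xi\bigr)$ in the second equation of \eqref{jowo} and the quantity $Q(p\xi,z) = -p\,Q(z,\xi)$ appearing on the right-hand side are both linear in $\xi$. Imposing the additional linear condition $\sum_j(p-2j+1)u_j = -p z$ makes these two $\xi$-terms cancel identically, and \eqref{jowo} reduces to the $\xi$-free linear-quadratic system
\[
\sum_{j=1}^p u_j = z,\qquad \sum_{j=1}^p(p-2j+1)\,u_j = -p z,\qquad \sum_{1\le j<k\le p} Q(u_j,u_k) = t.
\]
Any estimate on its solutions uniform in $(z,t)$ is automatically uniform in $\xi$ as well.

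\emph{Step 2 (reduction to a compact set).} The above system is invariant under the scaling $(u_j,z,t)\mapsto(\lambda u_j,\lambda z,\lambda^2 t)$, $\lambda>0$, and the quantity $|z|+|t|^{1/2}$ scales linearly with $\lambda$. Consequently it suffices to exhibit, for each $(z,t)$ in the compact set $\{|z|+|t|^{1/2}=1\}$, a solution with $\sum_j|u_j|$ bounded by a universal constant; the general bound \eqref{jowone} then follows by dilation.

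\emph{Step 3 (solvability).} Write $u_j = c_j z + w_j$, where the scalars $c_1,\dots,c_p$ are chosen so that $\sum c_j=1$ and $\sum (p-2j+1)c_j = -p$ (both solvable for $p\ge 2$), and where the $w_j\in\R^m$ satisfy the homogeneous linear conditions $\sum w_j=0$ and $\sum(p-2j+1)w_j=0$. Since the $c_jz$ are pairwise parallel and $Q$ is alternating, the products $Q(c_jz,c_kz)$ vanish and the quadratic equation becomes
\[
L_z(w) + \sum_{1\le j<k\le p} Q(w_j,w_k) = t,
\]
where $L_z$ is a linear form in $w$ with coefficients of order $|z|$. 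The H\"ormander condition \eqref{horma} on $Q$, together with a sufficiently large $p$, provides enough freedom to construct $w_j$'s obeying the homogeneous linear constraints and realizing any prescribed $t\in\R^\ell$. A continuity argument on the compact unit sphere then gives a uniform upper bound on $\sum_j|u_j|$, and scaling returns \eqref{jowone}.

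\emph{Main obstacle.} The technical heart is Step 3: the cross term $L_z(w)$, of size $|z|\,|w|$, would naively force $|w|\sim\sqrt{|z|\,|w|+|t|}$, which is not the desired scaling $|z|+|t|^{1/2}$. The cleanest resolution exploits the fact that, for $p$ large, the admissible $w$'s span a space of dimension $(p-2)m$, much larger than the $\ell$-dimensional range of $L_z$: one may therefore first pick $w$'s in the kernel of $L_z$ (compatibly with the two homogeneous linear constraints) and then use the remaining freedom to realize any $t$ via $\sum Q(w_j,w_k) = t$, which is the classical ball-box estimate enabled by \eqref{horma}.
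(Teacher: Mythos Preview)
Your Step~1 coincides with the paper's opening move: imposing $\sum_j(p-2j+1)u_j=-pz$ removes $\xi$ and yields the $\xi$-free system~\eqref{quattordici}. From there the paper is constructive: it passes to partial sums $v_k=\sum_{j\le k}u_j$, under which $\sum_{j<k}Q(u_j,u_k)$ telescopes to $\sum_k Q(v_k,v_{k+1})$ and the two linear constraints become $v_p=z$, $\sum_{j\le p-1}v_j=-z/2$. Setting selected $v_k$'s to zero decouples the quadratic equation into independent pairs $Q(v_{1+3i},v_{2+3i})$; each pair is then solved with $|v|\le C|t|^{1/2}$ via \cite[Lemma~2.3]{Morbidelli18}, and the remaining $v_j$'s are read off from the linear relations with size at most $C(|z|+|t|^{1/2})$.

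Your Steps~2--3 leave precisely this work undone. The decisive assertion---that under three homogeneous scalar constraints on $(w_1,\dots,w_p)$ one can solve $\sum_{j<k}Q(w_j,w_k)=t$ with $\sum|w_j|\le C|t|^{1/2}$---is labelled ``the classical ball-box estimate,'' but the classical statement concerns the \emph{unconstrained} product of exponentials and does not automatically survive linear constraints on the increments; verifying that it does is the entire content of the proposition, and it is exactly what the paper's change of variables achieves. The compactness route in Step~3 does not close the gap either: even granting surjectivity of the homogeneous quadratic map $w\mapsto\sum Q(w_j,w_k)$ on the constrained subspace (which you have not established), a ``continuity argument on the compact unit sphere'' does not produce a uniform bound, since there is no continuous selection of preimages and the minimal-norm function is not a priori upper semicontinuous. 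So the proposal has a genuine gap at exactly the point where the construction is needed.
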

By standard facts, $|z|+|t|^{1/2}$ is equivalent to $d(z,t)$.
 In \cite[Theorem~2.1]{Morbidelli18} the second author solved a system similar to~\eqref{jowo},  but without the term $Q(p\xi, z)$.     Unfortunately, the estimates of the mentioned paper are not sufficient to discuss the present case. Furthermore,  here we find a method of solution which is much simpler than the one in \cite{Morbidelli18}.

Before proving Proposition~\ref{appro} we show how such result gives the required estimate~\eqref{addis}.  
\begin{proof}
 [Proof of Theorem \ref{pasodos}]
Let us fix $(w,0)\in\R^m\times\R^\ell$. Let $(z,t)$ and take a solution of~\eqref{jowo} satisfying~\eqref{jowone}. Using the definition of control distance and the Euclidean Taylor formula we discover that 
 \begin{equation}
\begin{aligned}
  d((w,0)\cdot(z,t)) &\leq \sum_{j=1}^p |\xi+u_j|
  =\sum_{j=1}^p \Big( |\xi| +\Big\langle u_j, \frac{\xi}{|\xi|}\Big\rangle +O(|u_j|^2)\Big)
  \\&=|p\xi| +
  \Big\langle z, \frac{\xi}{|\xi|} \Big\rangle +O(|z|^2+|t|) ,
\end{aligned}
 \end{equation} 
 which is the required inequality~\eqref{addis}. 
 \end{proof}
 
 \begin{proof}[Proof of Proposition \ref{appro}]
 It suffices to show that 
there is $C>0$ such that for all $(z,t)\in\R^m\times\R^\ell$, the system 
  \begin{equation}\label{quattordici} 
 \left\{
 \begin{aligned}
 & \sum_{j=1}^p u_j = z 
 \\&
 \sum_{j=1}^p (p-2j+1)u_j=-p z 
  \\
 &\sum_{1\leq j<k\leq p}Q(u_j, u_k)
 = t 
        \end{aligned}
 \right.
 \end{equation} 
 has a solution that satisfies estimate~\eqref{jowone}.  
 Note that the system~\eqref{quattordici} does not contain~$\xi$. Therefore our final estimates will be independent of $\xi\in\R^m$.

 Observe now that the second equation of \eqref{quattordici}, combined with the first, can be written in the form  
 \begin{equation}
  \label{quindici}
  \sum_{j=1}^p ju_j = \frac{1+2p}{2}z
 \end{equation}

    Let us make the linear change of variable 
\begin{equation*}
 v_1=u_1,\quad v_2 = u_1 + u_2, \quad \dots,v_k=\sum_{j=1}^k u_j=v_{k-1}+  u_k, \quad\text{up to $k=p.$}
\end{equation*}
Therefore, we have 
\begin{equation*}
 \sum_{j=1}^{p-1}v_j = \sum_{k=1}^{p-1}(p-k)u_k=p\sum_{k=1}^p u_k -\sum_{k=1}^p ku_k=pz
 -\sum_{k=1}^p ku_k.
\end{equation*}
Comparing with \eqref{quindici}, we discover that the   
 first two equations of the system~\eqref{quattordici} become 
\begin{equation}\label{ells} 
 v_p = z \quad
\text{ and }\quad 
 \sum_{j=1}^{p-1}v_{j}=-z/2. 
\end{equation}
Since we would have no advantage in solving the problem with small $p$, we will feel  free to use large values of $p$ in the argument below.    
The quadratic part takes the form
\begin{equation}\label{soqquadro} 
\begin{aligned}
t&=  \sum_{1\leq j<k\leq p}Q(u_j, u_k)=\sum_{k=1}^{ p-1}Q(v_k, v_{k+1})
\\&
=\sum_{k\leq p-3}Q(v_k, v_{k+1})+ Q\big(v_{p-2}- v_p, v_{p-1}\big).
\end{aligned}
\end{equation} 
Let us choose $v_{p-1}=0$, so that the last term in~\eqref{soqquadro} vanishes. Fix also $v_{p-3}=0$. Then we have  fixed the set of conditions 
\begin{equation}\label{jocondor} 
 v_p=z,\quad v_{p-1}=0,\qquad v_{p-2}=-\frac{z}{2}-\sum_{j\leq p-4}v_j,\qquad v_{p-3}=0.
\end{equation} 
 Under all these choices, the first two equations of~\eqref{quattordici} are satisfied, while the quadratic part takes the easy form
 \begin{equation*}
  \sum_{j\leq p-5}Q(v_j, v_{j+1})= t,
 \end{equation*}
  where the variables $v_1, v_2, \dots, v_{p-4}$ are completely free.    Finally, taking $h\in\N$ and $p-5=1+3h$ and chooosing $v_3=v_6=v_{9}=\cdots
= v_{3h}=0$ for all $h\in\{ 1,2,\dots \}$, the system becomes
\begin{equation*}
 Q(v_1, v_2)+ Q(v_4, v_5)+ Q(v_7,v_{8})+\cdots 
 + Q(v_{1+3h}, v_{2+3h})=t,
\end{equation*}
which takes a pairwise decoupled form. Then it suffices to apply the H\"ormander condition, as in \cite[Lemma~2.3]{Morbidelli18}
 to see that   if $h\in\N$  is sufficiently large (depending on the algebraic strucure of 
 the group only) then     there is a solution 
 satisfying the required estimates  $|v_j|\leq C|t|^{1/2}$ for all $j\leq 
 2+3h=p-4$. The final terms $v_j$ with $j=p-3,p-2,p-1$ and $p$ can be estimated 
 by~\eqref{jocondor} with $C(|z|+|t|^{1/2})$.
%
%
%
%
%
 \end{proof}


  
\begin{remark} 
 In  \cite {PinamontiSpeight18}, Pinamonti and Speight introduce the notion
  of deformable direction in a Carnot group of step $s\geq 1$.
 We observe informally that from our results one can get the following two facts.
 \begin{itemize}[nosep]
  \item In any Carnot group,    if there are $p\in\N$ and $w\in\R^m\setminus\{0\}$ such that $\Gamma^{(p)}$ is a submersion at $(w,\dots, w)\in(\R^m)^p$, then   the direction $w\cdot X\in V_1$   is deformable.
  \item   If we restrict to   Carnot groups of step two, the discussion of 
  Section~\ref{duofold} proves   that  any horizontal direction is deformable. 
 \end{itemize}
Therefore, our results can be used to give another  proof of the deformability results in \cite{LeDonnePinamontiSpeight17,PinamontiSpeight18}. 
\end{remark}


%

\section*{Acknowledgements}  
 We thank the referee, whose remarks  contributed to some improvements of the paper. 
 
\medskip The authors are members of the {\it Gruppo Nazionale per
l'Analisi Matematica, la Probabilit\`a e le loro Applicazioni} (GNAMPA)
of the {\it Istituto Nazionale di Alta Matematica} (INdAM)

\footnotesize

 \phantomsection
\addcontentsline{toc}{section}{References}

\footnotesize

\def\cprime{$'$} \def\cprime{$'$}
\providecommand{\bysame}{\leavevmode\hbox to3em{\hrulefill}\thinspace}
\providecommand{\MR}{\relax\ifhmode\unskip\space\fi MR }
\providecommand{\MRhref}[2]{%
  \href{http://www.ams.org/mathscinet-getitem?mr=#1}{#2}
}
\providecommand{\href}[2]{#2}

\end{document}